\def\frk{\frak}               
\def\Phi{{\frk n}}
\def\Phi{{\frk N}}
\def\opn#1#2{\def#1{\operatorname{#2}}} 
\opn\chara{char} \opn\length{\ell} \opn\pd{pd} \opn\rk{rk}
\opn\projdim{proj\,dim} \opn\injdim{inj\,dim} \opn\rank{rank}
\opn\depth{depth} \opn\grade{grade} \opn\height{height}
\opn\embdim{emb\,dim} \opn\codim{codim}
\opn\Tr{Tr} \opn\bigrank{big\,rank}
\opn\superheight{superheight}\opn\lcm{lcm}
\opn\trdeg{tr\,deg}
\opn\reg{reg} \opn\lreg{lreg} \opn\ini{in} \opn\lpd{lpd}
\opn\size{size}\opn\bigsize{bigsize}
\opn\cosize{cosize}\opn\bigcosize{bigcosize}
\opn\sdepth{sdepth}\opn\sreg{sreg}
\opn\link{link}\opn\fdepth{fdepth}
\opn\index{index}
\opn\index{index}
\opn\indeg{indeg}
\opn\N{N}
\opn\SSC{SSC}
\opn\SC{SC}
\opn\lk{lk}
\opn\div{div} \opn\Div{Div} \opn\cl{cl} \opn\Cl{Cl}
\opn\Spec{Spec} \opn\Supp{Supp} \opn\supp{supp} \opn\Sing{Sing}
\opn\Ass{Ass} \opn\Min{Min}\opn\Mon{Mon} \opn\dstab{dstab} \opn\astab{astab}
\opn\Syz{Syz}
\opn\reg{reg}
\opn\Ann{Ann} \opn\Rad{Rad} \opn\Soc{Soc}
\opn\Im{Im} \opn\Ker{Ker} \opn\Coker{Coker} \opn\Am{Am}
\opn\Hom{Hom} \opn\Tor{Tor} \opn\Ext{Ext} \opn\End{End}\opn\Der{Der}
\opn\Aut{Aut} \opn\id{id}
\opn\nat{nat}
\opn\pff{pf}
\opn\Pf{Pf} \opn\GL{GL} \opn\SL{SL} \opn\mod{mod} \opn\ord{ord}
\opn\Gin{Gin} \opn\Hilb{Hilb}\opn\sort{sort}
\opn\initial{init}
\opn\ende{end}
\opn\height{height}
\opn\type{type}
\opn\aff{aff} \opn\con{conv} \opn\relint{relint} \opn\st{st}
\opn\lk{lk} \opn\cn{cn} \opn\core{core} \opn\vol{vol}
\opn\link{link} \opn\Link{Link}\opn\lex{lex}
\opn\gr{gr}
\def\pot#1#2{#1[\kern-0.28ex[#2]\kern-0.28ex]}
\opn\dirlim{\underrightarrow{\lim}}
\opn\inivlim{\underleftarrow{\lim}}
\def\Implies{\ifmmode\Longrightarrow \else
        \unskip${}\Longrightarrow{}$\ignorespaces\fi}
\def\implies{\ifmmode\Rightarrow \else
        \unskip${}\Rightarrow{}$\ignorespaces\fi}
\def\iff{\ifmmode\Longleftrightarrow \else
        \unskip${}\Longleftrightarrow{}$\ignorespaces\fi}
\newtheorem{Theorem}{Theorem}[section]
 \newtheorem{Lemma}[Theorem]{Lemma}
 \newtheorem{Corollary}[Theorem]{Corollary}
 \newtheorem{Proposition}[Theorem]{Proposition}
 \newtheorem{Example}[Theorem]{Example}
 \newtheorem{Definition}[Theorem]{Definition}
\newtheorem{Notation}[Theorem]{Notation}
\let\epsilon\varepsilon
\let\kappa=\varkappa
\def\qed{\ifhmode\textqed\fi
      \ifmmode\ifinner\quad\qedsymbol\else\dispqed\fi\fi}
\def\textqed{\unskip\nobreak\penalty50
       \hskip2em\hbox{}\nobreak\hfil\qedsymbol
       \parfillskip=0pt \finalhyphendemerits=0}
\def\dispqed{\rlap{\qquad\qedsymbol}}
\opn\dis{dis}
\def\pnt{{\raise0.5mm\hbox{\large\bf.}}}
\opn\Lex{Lex}
\begin{document}

\title{ Linear resolutions and quasi-linearity of monomial ideals}
\author{Dancheng Lu}

\address{School  of Mathematical Sciences, Soochow University, 215006 Suzhou, P.R.China}
\email{ludancheng@suda.edu.cn}

\keywords{quasi-linear, Betti numbers, linear resolution, strongly linear monomial,  regularity, linear quotients, stable monomial ideal, $d$-uniform clutter}

\subjclass[2010]{13D02; 13F55}

\begin{abstract} We introduce the notion of quasi-linearity and prove it is necessary  for a monomial ideal to have a linear resolution and clarify all the quasi-linear   monomial  ideals generated in degree 2. We also introduce  the notion of  a strongly linear monomial over a monomial ideal and prove that if $\mathbf{u}$ is a monomial strongly linear over $I$ then $I$  has a linear resolution (resp: is quasi-linear) if and only if $I+\mathbf{u}\mathfrak{p}$ has   a linear resolution (resp: is quasi-linear). Here $\mathfrak{p}$ is any monomial prime ideal.
\end{abstract}

\maketitle

\section*{Introduction}

Graded free resolutions  of graded modules  have been a central topic in commutative  algebra for a long history. Let $M$  be a finitely generated graded module over a polynomial ring $\mathbb{K}[x_1,\cdots, x_n]$, where $\mathbb{K}$ is a field. Then $M$ has a minimal graded free resolution of length at most $n$. This is the celebrated Hilbert's Syzygy Theorem. However, there is no explicit description of   the   minimal free resolution for an arbitrary graded module or even an arbitrary  (quadratic) monomial ideal till now, see \cite[Section 6.3]{I10}.

Recall that  a linear  resolution is a graded free resolution in which  the matrices of all  differential maps  have  entries in the set of linear forms. The graded ideals with linear resolutions have been investigated  quite extensively, for it is  pertinent to the Koszul property of algebras.  An important question in this area is how to test if a graded module or  a monomial ideal has a linear resolution. It  was  proved in \cite{HT02} that if $I$ is a graded ideal generated in a single degree with linear quotients, then $I$ has  a linear resolution.  This sufficient condition is independent of character of $\mathbb{K}$ and is easily checked relatively.  On the other side, the question when a quadratic  monomial ideal  has a  linear resolution was fully resolved:   \cite{Fro90} addressed  the squarefree case  and  \cite{HHZ04}  did  the general case.

In  this paper, we  investigate further  properties of monomial ideals with  a linear resolution. In Section 2, we first prove if a monomial ideal  $I$ has a linear resolution then it is {\it quasi-linear}, see Theorem~\ref{main}.  By definition, a monomial ideal $I$ is quasi-linear if  the colon ideal $(G(I)\setminus \{\mathbf{u}\}):\mathbf{u}$ is generated by linear forms for every $\mathbf{u}\in G(I)$.  This result is based on the main result of \cite{DDL}  together with the tools of  Alexander duality and polarization. Thus, we have the following hierarchical relationships for a monomial ideal $I$ generated in a single degree:\begin{equation*}
\begin{aligned}I \mbox{ has linear quotients} \Longrightarrow I \mbox{ has a linear resolution for any field } \mathbb{K} \\ \Longrightarrow I\mbox{ has a linear resolution for some field } \mathbb{K} \Longrightarrow I\mbox{ is quasi-linear}.\end{aligned}
\end{equation*}

A direct consequence of the above result is that a monomial ideal generated in a single degree with linear quotients is characterized in terms of the existences of a sequence of monomial ideals with a linear resolution, see Corollary~\ref{inter}. We say  a monomial ideal $I$ is  {\it critical linear} if it has a linear resolution, but $(G(I)\setminus \{\mathbf{u}\})$ has not a linear resolution for any $\mathbf{u}\in G(I)$.
 Another  consequence is  that every  monomial ideal with a linear resolution  is linear over a critical linear monomial. Here, for monomial  ideals $I$ and $J$, we say $I$ is {\it linear over} $J$ if there exist $r>0$ and monomials $\mathbf{u}_1,\ldots,\mathbf{u}_r$ such that $I=(J, \mathbf{u}_1,\ldots,\mathbf{u}_r)$ and $(J,\mathbf{u}_1,\ldots,\mathbf{u}_{i-1}):\mathbf{u}_i$ is generated by linear forms for $i=1,\ldots,r$.

It is an  interesting question  when  a quadratic monomial ideal is quasi-linear. We prove in Proposition~\ref{quadratic} that if $I$ is a monomial ideal generated in degree 2 then $I$  is quasi-linear if and only if the induced matching number of $G_I$ is equal to 1.  Here $G_I$ is a simple graph associated to $I$. In addition, if $\mathbf{u}$ is a monomial of degree $d$, we prove that $(G(\mathfrak{m}^d)\setminus \{\mathbf{u}\})$ is quasi-linear if and only if $|\mathrm{supp}(\mathbf{u})|\neq 2$.  Here $\mathfrak{m}$ denotes the maximal monomial ideal $(x_1,\ldots,x_n)$.

Let $I$ and $J$ be  monomial ideals generated in degree $d$. If $I$ is linear over $J$, then we can deduce that $I$ has a linear resolution from the condition that $J$ has a linear resolution.  However, the converse is not true. So it is natural to ask the following question:

\begin{center}

 {\it \qquad Is there  a  notion stronger  than ``linear over" such that if $I$  is ``strongly linear over" $J$, then $I$  has a linear resolution if and only if $J$ has a linear resolution?  }

 \end{center}

 In Section 3, we answer this question positively by introducing   the notion of a {\it monomial strongly linear} over a monomial ideal.  Let $I$ be a monomial ideal generated in degree $d$. We call a monomial $\mathbf{u}$ to be {\it  strongly linear }  over $I$ if $I:\mathbf{u}$ is generated by variables and $\deg(\mathbf{u})=d-1$. In this case, we say the ideal  $I+\mathbf{u}\mathfrak{p}$ is
{\it (1-step) strongly linear} over $I$, where $\mathfrak{p}$ is any monomial prime ideal. Strongly linear monomials  over a monomial ideal are not uncommon.
\begin{Example}\em  Let $I=(x_1x_3^2,,x_3^2x_4,x_3x_4^2)$. Then $x_3x_4$ is a  monomial strongly linear over $I$; If $I=(x_1x_2x_3, x_3^2x_4,x_3x_4^2)$, then $x_1x_2$ is a monomial  strongly linear over $I$.
\end{Example}

\begin{Example} \em
If $J$ is a stable monomial ideal in   degree $d$, then $\mathfrak{m}^d$ is strongly linear over $J$, see Proposition~\ref{stable}.
\end{Example}

   Suppose that $I$ is strongly linear over $J$.  We prove in Theorem~\ref{main1}  that both  $I$ and $J$ have the same Betti numbers in their non-linear strands, i.e.,  $\beta_{i,i+j}(I)=\beta_{i,i+j}(J)$ for all $i$ and all $j\neq d$. In particular, both $I$ and $J$ share the  same regularity, and so $I$ has a  linear resolution if and only if $J$ has a linear resolution. Under the same assumption,   it is proved  in  Corollary~\ref{firstmain} that $I$ is quasi-linear if and only if $J$ is quasi-linear. Finally, if $I$ is 1-step  strongly linear over $J$, we compute the difference between the Betti numbers in the linear strands of $I$ and $J$.

The concept of a  {\it simplicial maximal subcircuit}  was introduced in \cite{BYZ} to define chordal clutters  and was further investigated in \cite{BHY,BY}.  In the last section, we show  that if a $(d-1)-$subset $e$ of $[n]$ is a  simplicial maximal subcircuit of a $d$-uniform clutter $\mathcal{C}$, then  $\mathbf{x}_e:=\prod_{i\in e}x_i$ is  strongly linear over $I(\overline{\mathcal{C}})$. Thanks to this observation, we recover  several of  main results of  \cite{BY}.  More explicitly,  \cite[Theorem 2.4 and Corollary 2.6]{BY} are the direct consequences of Proposition~\ref{B}. It turns out that the proofs of our results are  much simpler than the proofs of the corresponding results given in \cite{BY}.

\section{Preliminaries}

In this section, we fix notation and recall some concepts and results which are useful  in this paper.

Throughout this paper,    we denote by $[n]$ the finite set $\{1,2,\ldots,n\}$ and let $R=\mathbb{\mathbb{K}}[x_1,\ldots,x_n]$ be the polynomial ring in variables $x_1,\ldots,x_n$ over a field $\mathbb{K}$. The ring $R$ is naturally graded with $\deg(x_i)=1$ for $i=1,\ldots, n$. Sometimes, we consider $R$ as a  multi-graded ring with $\mathrm{mdeg}(x_i)=e_i$, where $e_1,\ldots,e_n$  are the standard basis of $\mathbb{Z}^n$. If  $\mathbf{u}=x_1^{a_1}\cdots x_n^{a_n}\in R$,  then $\deg(\mathbf{u})=a_1+\cdots+a_n$ and $\mathrm{mdeg}(\mathbf{u})=(a_1,\ldots,a_n)$.  Let $\mathbf{a}$ be a vector $(a_1,\ldots,a_n)\in \mathbb{Z}_{\geq 0}^n$.  We also use $\mathbf{x}^{\mathbf{a}}$ to denote the monomial $x_1^{a_1}\cdots x_n^{a_n}$, and use $\mathrm{supp}(\mathbf{a})$ or $\mathrm{supp}(\mathbf{x}^{\mathbf{a}})$ to denote the set $\{i\in [n]\:\; a_i\neq 0\}$.  If $A$ is  a subset of $[n]$, then  $|A|$ denotes  the number of elements of $A$, and $\begin{pmatrix}A\\i\end{pmatrix}$   denotes  the collection of all $i$-subsets of $A$ for $1\leq i\leq |A|$.

\subsection{Multi-graded free resolutions and Betti numbers} Let $M$ be a finitely generated multi-graded $R$-module. Then $M$ admits a minimal multi-graded free resolution: $$0\rightarrow F_p\rightarrow F_{p-1}\rightarrow \cdots \rightarrow F_0\rightarrow M\rightarrow 0.$$
If we write  $F_i\cong \bigoplus_{\mathbf{a}\in \mathbb{Z}^n}R[-\mathbf{a}]^{\beta_{i,\mathbf{a}}(M)}$, then the numbers $\beta_{i,\mathbf{a}}(M)=\dim_{\mathbb{K}} \mathrm{Tor}_i^R(M,\mathbb{K})_{\mathbf{a}}$ are called the {\it multi-graded Betti numbers} of $M$. The graded Betti numbers are defined as follows:
$$\beta_{i,j}(M)=\sum_{\mathbf{a}\in \mathbb{Z}^n, |\mathbf{a}|=j}\beta_{i,\mathbf{a}}(M).$$

The regularity of a graded module $M$, which measures the complexity of its graded free resolution,  is a very important invariant. It is defined to be the number
$$\reg(M)=\max\{j-i\:\; \beta_{i,j}(M)\neq 0\}.$$

Let $I$ be a monomial ideal of $R$. Then $I$ and $R/I$ are naturally multi-graded $R$-modules. Moreover, we have the following easy formula: $$\beta_{i,\mathbf{a}}(I)=\beta_{i+1,\mathbf{a}}(R/I) \mbox{ for all } i\geq 0 \mbox{ and } \mathbf{a}\in \mathbb{Z}^n ;$$ and
$$ \reg(R/I)=\reg(I)-1.$$
\subsection{Linear resolution}

Let $d\in \mathbb{Z}$. A  finitely generated graded module $M$  has a $d$-{\it linear resolution} if $\beta_{i,j}(M)=0$ for any pair $i,j$ with $j-i\neq d$. We record the following well-known result for a better understanding of linear resolutions.
\begin{Proposition} Let $M$ be a finitely generated graded $R$-module. Then the following are equivalent:
\begin{enumerate}
                                                                                                           \item $M$ has a $d$-linear resolution;
                                                                                                           \item $M$ is generated in degree $d$ and $\reg(M)=d$;
                                                                                                           \item The matrices of all  differential maps in a minimal free resolution of $M$  have  entries in the set of linear forms.
                                                                                                         \end{enumerate}
\end{Proposition}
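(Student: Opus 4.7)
The plan is to run the cycle $(1)\Rightarrow(2)\Rightarrow(1)$ and then treat $(1)\Leftrightarrow(3)$ as a translation between statements about Betti numbers and statements about matrix entries.

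For $(1)\Rightarrow(2)$, I would simply unpack definitions: specializing the hypothesis $\beta_{i,j}(M)=0$ for $j-i\neq d$ to $i=0$ forces $\beta_{0,j}(M)=0$ for $j\neq d$, so $M$ is generated in degree $d$; and $\reg(M)=\max\{j-i:\beta_{i,j}(M)\neq 0\}=d$ is then immediate.

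For $(2)\Rightarrow(1)$, the key step is a standard induction on the homological degree $i$ showing $\beta_{i,j}(M)=0$ for all $j<d+i$. The base case is the assumption that $M$ is generated in degree $d$. For the inductive step, in a minimal free resolution $\cdots\to F_{i+1}\to F_i\to\cdots$, the image of $F_{i+1}\to F_i$ is contained in $\mathfrak{m}F_i$ by minimality, so its minimal generators have degree strictly greater than the minimum generating degree of $F_i$; by induction this is $\geq d+i+1$, giving $\beta_{i+1,j}(M)=0$ for $j<d+i+1$. Combined with the upper bound $\beta_{i,j}(M)=0$ for $j-i>\reg(M)=d$, this concentrates the Betti diagram on the diagonal $j-i=d$, which is (1).

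For $(1)\Leftrightarrow(3)$, the bridge is the elementary fact that a nonzero entry of a differential matrix connecting a summand $R[-b]$ of the source to a summand $R[-a]$ of the target is a homogeneous polynomial of degree $b-a$. Under (1) we have $F_i=R[-(d+i)]^{\beta_{i,d+i}(M)}$, so every nonzero entry has degree $1$ and is therefore a linear form. Conversely, if every nonzero matrix entry is linear, then any pair of summands linked by a nonzero entry differs in shift by exactly $1$; propagating this inductively from $F_0$, whose generators sit in the initial degree(s) of $M$, shows that all $F_i$ are concentrated in a single shift $d+i$, which is (1). The main obstacle, though minor, is the careful handling of $(3)\Rightarrow(1)$ when $M$ could a priori be generated in several degrees: zero matrix entries carry no degree information, so one must use the connectivity of the resolution forced by minimality to synchronize the shifts. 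In practice this is absorbed by reading (3) as already incorporating the single-generating-degree hypothesis, matching the context in which the proposition is invoked in the sequel.
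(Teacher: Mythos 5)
The paper does not actually prove this proposition: it is recorded as a ``well-known result'' with no argument supplied, so there is no in-paper proof to compare against. Judged on its own, your argument is the standard one and is essentially correct. The cycle $(1)\Rightarrow(2)\Rightarrow(1)$ is clean: the base case and the minimality argument (the $(i+1)$-st syzygy module sits inside $\mathfrak{m}F_i$, so its generators live in degrees at least one more than the minimal generating degree of $F_i$) give the lower bound $\beta_{i,j}(M)=0$ for $j<d+i$, and the regularity hypothesis gives the matching upper bound. For $(1)\Leftrightarrow(3)$ you correctly isolate the one genuine delicacy: as literally stated, $(3)$ does not imply $(1)$ for an arbitrary graded module, since a direct sum such as $R/(x)\oplus\bigl(R/(y)\bigr)[-5]$ has a minimal free resolution all of whose nonzero matrix entries are linear forms, yet is not generated in a single degree. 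Your resolution of this --- reading $(3)$ as tacitly including the single-generating-degree hypothesis, which is how the proposition is used throughout the paper (always for ideals generated in one degree) --- is the right call, and it is worth your having made the imprecision in the statement explicit rather than papering over it. No gaps beyond that.
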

A graded  ideal $I$ has {\it linear quotients} if there is a system of minimal homogeneous generators $f_1,f_2,\ldots,f_m$ such that the colon ideal $(f_1,\ldots,f_{i-1}):f_i$ is generated by linear forms for each $i=2,\ldots,m$. By e.g. \cite[Proposition 8.2.1]{HH}, if $I$ is generated in degree $d$ and $I$ has linear quotients then $I$ has a $d$-linear resolution.

\subsection{Monomial ideals} Let $I$ be a monomial ideal of $R$. We denote by $G(I)$  the unique minimal set of   monomial generators of $I$.  Given monomials $\mathbf{u}$ and $\mathbf{v}$, we denote by $[\mathbf{u},\mathbf{v}]$ and $(\mathbf{u},\mathbf{v})$  the least common multiple and the greatest common divisor of $\mathbf{u}$ and $\mathbf{v}$, respectively.
Let $J$  be an another monomial ideal. The colon ideal $I:J$ is defined to be the ideal $\{f\in R\:\; fg\in I \mbox{ for any }g\in J\}$. We usually write $I:\mathbf{v}$ for $I:(\mathbf{v})$ for short.
It is well-known that
$$I:\mathbf{v}=(\frac{[\mathbf{u},\mathbf{v}]}{\mathbf{v}}(=\frac{\mathbf{u}}{(\mathbf{u},\mathbf{v})})\:\; \mathbf{u}\in G(I))$$
and
$$I\cap J=([\mathbf{u},\mathbf{v}]\:\; \mathbf{u}\in G(I), \mathbf{v}\in G(J)).$$
In particular, both $I:\mathbf{v}$ and $I\cap J$ are monomial ideals.
\subsection{Simplicial complexes}
A {\it simplicial complex} $\Delta$ on $[n]$ is a collection of subsets of $[n]$ such that if $F_1\subseteq F_2\subseteq [n]$ and $F_2\in \Delta$, then $F_1\in \Delta$. Each element of $\Delta$ is called a {\it face} of $\Delta$. The {\it dimension} of a face $F$ is $\dim(F)=|F|-1$ and the dimension of $\Delta$ is defined to be the number $\max\{\dim(F)\:\; F\in \Delta\}$. A {\it facet} is a maximal face of $\Delta$ with respect to inclusion. We call $\Delta$ to be {\it pure} if each facet of $\Delta$ has the  same dimension.  Let $\mathcal{F}(\Delta)$ denote the set of facets of $\Delta$. When $\mathcal{F}(\Delta)=\{ F_1,\ldots,F_m\}$, we write $\Delta=\langle F_1,F_2,\ldots,F_m\rangle$.
A simplicial complex is called {\it shellable} (see \cite{BW97}) if its facets can be ordered $F_1,\ldots,F_m$ such that for all $2\leq j\leq m$ the subcomplex $$\langle F_1,\ldots,F_{j-1}\rangle\cap \langle F_j\rangle$$ is pure of dimension  $\dim(F_j)-1$.

For $F\in\Delta$, we set $\mathbf{x}_F=\prod_{i\in F}x_i$. The {\it Stanley-Reisner ideal} $I_{\Delta}$ is defined to be the ideal $$I_{\Delta}=(\mathbf{x}_{F}\:\ F\notin \Delta)$$ and the facet ideal of $\Delta$ is defined to be the ideal $$I(\Delta)=(\mathbf{x}_F\:\; F\in \mathcal{F}(\Delta)).$$
The Stanley-Reisner ring of $\Delta$ is the ring $\mathbb{K}[\Delta]=\mathbb{K}[x_1,x_2,\ldots,x_n]/I_{\Delta}$. It is known that the Krull dimension of $\mathbb{K}[\Delta]$ is $\dim(\Delta)+1$.
\subsection{Alexander Duality} Given a simplicial complex $\Delta$ on $[n]$, the {\it Alexander dual} of $\Delta$  is defined to be the complex $$\Delta^{\vee}=\{[n]\setminus F\:\; F\notin \Delta\}.$$
It was  proved in \cite{EG}  that $I_{\Delta}$  has a linear resolution if and only if $\mathbb{K}[\Delta^{\vee}]$ is Cohen-Macaulay. Lately it was proved in \cite{HRW99} that  $I_{\Delta}$  is componentwise linear if and only if $\mathbb{K}[\Delta^{\vee}]$ is sequentially Cohen-Macaulay.

\section{Linear resolutions and quasi-linearity}

In this section we introduce the concept of quasi-linearity (see Definition~\ref{weakly}) and prove that if a monomial ideal has a linear resolution then it is quasi-linear.
Some consequences of this result are presented and two classes of monomial ideals which are quasi-linear are given.

 Recall some notions from \cite{DDL}. Let $\Delta$ be   a simplicial complex. If $F$ is a facet of $\Delta$, then $\Delta_F$  denotes the simplicial complex whose facet set is $\mathcal{F}(\Delta)\setminus \{F\}$.  Following \cite{DDL}, we say  $\Delta_F$ to $\Delta$ is  a {\it shelling move} if $\langle F\rangle \cap \Delta_F$ is pure of dimension $\dim(F)-1$. If $\Gamma$ is a subcomplex of $\Delta$ generated by facets of $\Delta$, we say $\Delta$ is {\it shelled over } $\Gamma$ if there exists a sequence of shelling moves which take $\Gamma$ to $\Delta$.

By  \cite[Lemma 1.5.3]{HH}, if $\Delta=\langle F_1,F_2,\ldots, F_k\rangle$, then $I_{\Delta^{\vee}}=I(\overline{\Delta})=(\mathbf{x}_{\overline{F_1}}, \mathbf{x}_{\overline{F_2}},\ldots,\mathbf{x}_{\overline{F_k}})$, where $\overline{F}=[n]\setminus F$ for a subset $F\subseteq [n]$.

\begin{Lemma} \label{F} Let $\Delta=\langle F_1,F_2,\ldots,F_k\rangle$. The following are equivalent:

\begin{enumerate}
  \item $\Delta_{F_k}$ to $\Delta$ is a shelling move;
  \item $I_{\Delta_{F_k}^{\vee}}:\mathbf{x}_{\overline{F_k}}$ is generated by variables.
\end{enumerate}
\end{Lemma}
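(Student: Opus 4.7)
The plan is to translate both conditions into the same combinatorial statement about the facets $F_1,\ldots,F_k$. The key ingredients are the lemma from \cite{HH} just cited (for the Alexander dual), the monomial colon formula recalled in Subsection~1.3, and the definition of a shelling move given just above.

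First, I would apply the cited formula to $\Delta_{F_k} = \langle F_1,\ldots,F_{k-1}\rangle$ to obtain
$$I_{\Delta_{F_k}^{\vee}} = (\mathbf{x}_{\overline{F_1}},\ldots,\mathbf{x}_{\overline{F_{k-1}}}).$$
The colon formula, combined with the set-theoretic identity $\overline{F_i}\setminus\overline{F_k} = F_k\setminus F_i$, then yields
$$I_{\Delta_{F_k}^{\vee}} : \mathbf{x}_{\overline{F_k}} = (\mathbf{x}_{F_k\setminus F_i} \: 1\leq i\leq k-1).$$
Since this is a monomial ideal, it is generated by variables if and only if each of the listed generators $\mathbf{x}_{F_k\setminus F_i}$ is divisible by some $\mathbf{x}_{F_k\setminus F_j}$ with $|F_k\setminus F_j|=1$; equivalently, for every $i<k$ there exists $j<k$ with $|F_k\cap F_j|=|F_k|-1$ and $F_k\cap F_i\subseteq F_k\cap F_j$.

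Next, I would unpack condition (1). The facets of $\langle F_k\rangle\cap\Delta_{F_k}$ are the maximal elements of the family $\{F_k\cap F_i \: 1\leq i\leq k-1\}$, so purity of dimension $\dim(F_k)-1$ says exactly that every such maximal element has cardinality $|F_k|-1$. In particular, for each $i<k$ the face $F_k\cap F_i$ is contained in some $F_k\cap F_j$ with $|F_k\cap F_j|=|F_k|-1$, and conversely if this containment property holds then every maximal $F_k\cap F_j$ automatically has the required cardinality. This matches the algebraic condition derived in the previous paragraph, completing the equivalence.

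There is no substantive obstacle here; the main step is careful bookkeeping of set complements, and the whole content of the lemma is the dictionary between the purely combinatorial shelling criterion for $F_k$ and its algebraic translation, via Alexander duality, into a colon ideal being generated by variables.
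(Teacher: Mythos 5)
Your proposal is correct and follows essentially the same route as the paper: both reduce condition (2), via the colon formula and the identity $\overline{F_i}\setminus\overline{F_k}=F_k\setminus F_i$, to the statement that for every $i<k$ there is a $j<k$ with $|F_k\cap F_j|=|F_k|-1$ and $F_k\cap F_i\subseteq F_k\cap F_j$, and then identify this with purity of $\langle F_k\rangle\cap\Delta_{F_k}$ in dimension $\dim(F_k)-1$. Your write-up is in fact somewhat more explicit than the paper's about why that combinatorial condition matches each side.
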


\begin{proof} Since $I_{\Delta_{F_k}^{\vee}}=(\mathbf{x}_{\overline{F_1}}, \mathbf{x}_{\overline{F_2}},\ldots, \mathbf{x}_{\overline{F_{k-1}}})$, it is easy to see that  statement (2) is equivalent to having $\ell\in \overline{F_i}\setminus \overline{F_k}$   for any $1\leq i\leq k-1$ such that $\{\ell\}=\overline{F_j}\setminus \overline{F_k}$ for some  $1\leq j\leq k-1$, and this is the case if and only if  for any $1\leq i\leq k-1$, there exists $j\in  [k-1]$ such that $|F_j\cap F_k|=|F_k|-1$ and $F_i\cap F_k\subseteq F_j\cap F_k$.  Clearly, the latter is equivalent to statement (1).
\end{proof}

\begin{Proposition} \label{squarefree}  Let $I$ be a squarefree monomial ideal with $G(I)=\{\mathbf{u}_1,\ldots, \mathbf{u}_k\}$. Denote by $I_i$ the ideal $(\mathbf{u}_1,\ldots, \mathbf{u}_{i-1}, \mathbf{u}_{i+1},\ldots, \mathbf{u}_k)$.  If $I$ has a $d$-linear resolution, then for $i=1,\ldots,k$,
one has:
\begin{enumerate}
  \item $I_i:\mathbf{u}_i$ is generated by variables;
  \item  $\reg(I_i)\leq d+1$.
\end{enumerate}
\end{Proposition}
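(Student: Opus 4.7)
The plan is to translate the problem to the Alexander dual side, invoke the main result of \cite{DDL}, and then apply Lemma~\ref{F}.

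First, since each generator $\mathbf{u}_i$ is squarefree, I would write $\mathbf{u}_i=\mathbf{x}_{\overline{F_i}}$ with $F_i=[n]\setminus\mathrm{supp}(\mathbf{u}_i)$, so that $\Delta=\langle F_1,\ldots,F_k\rangle$ gives $I=I(\overline{\Delta})=I_{\Delta^{\vee}}$. Because $I$ is generated in degree $d$, the complex $\Delta$ is pure of dimension $n-d-1$, and because $I$ has a $d$-linear resolution, the Eagon--Reiner theorem ensures that $\mathbb{K}[\Delta]$ is Cohen--Macaulay.

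For part (1), I would apply the main result of \cite{DDL} to the pure Cohen--Macaulay complex $\Delta$ to conclude that for every facet $F_i$ the complex $\Delta_{F_i}$ to $\Delta$ is a shelling move. Since $I_i=(\mathbf{x}_{\overline{F_j}}:j\neq i)=I_{\Delta_{F_i}^{\vee}}$ and $\mathbf{u}_i=\mathbf{x}_{\overline{F_i}}$, Lemma~\ref{F} then immediately yields that $I_i:\mathbf{u}_i$ is generated by variables.

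For part (2), the key input is part (1). I would consider the short exact sequence
\[0\to (R/(I_i:\mathbf{u}_i))(-d)\xrightarrow{\cdot\,\mathbf{u}_i}R/I_i\to R/I\to 0,\]
which is exact because $I=I_i+(\mathbf{u}_i)$ and $\mathbf{u}_i\notin I_i$ (it is a minimal generator of $I$). By (1), $I_i:\mathbf{u}_i$ is generated by variables, so $R/(I_i:\mathbf{u}_i)$ is a polynomial ring of regularity zero, and hence $\reg((R/(I_i:\mathbf{u}_i))(-d))=d$. Since $I$ has a $d$-linear resolution, $\reg(R/I)=d-1$. The standard regularity bound on a short exact sequence then yields $\reg(R/I_i)\leq\max\{d,d-1\}=d$, so $\reg(I_i)=\reg(R/I_i)+1\leq d+1$.

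The main obstacle is the appeal to \cite{DDL}, which supplies the crucial combinatorial fact that every facet of the pure Cohen--Macaulay complex $\Delta$ can be removed as a shelling move; once that result and Lemma~\ref{F} are available, both (1) and the regularity estimate in (2) follow in a few lines.
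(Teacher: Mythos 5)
Your proposal is correct and follows essentially the same route as the paper: part (1) via Alexander duality, the Cohen--Macaulayness of the dual complex, the facet-removal result of \cite{DDL}, and Lemma~\ref{F}; part (2) via the multiplication-by-$\mathbf{u}_i$ short exact sequence and the fact that $R/(I_i:\mathbf{u}_i)$ has regularity zero. The only cosmetic difference is that you phrase the exact sequence as $0\to (R/(I_i:\mathbf{u}_i))(-d)\to R/I_i\to R/I\to 0$ while the paper uses the equivalent sequence $0\to I_i\to I\to (R/(I_i:\mathbf{u}_i))(-d)\to 0$; the regularity bookkeeping is the same.
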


\begin{proof} (1) Since $I$ has a $d$-linear resolution, $\deg(\mathbf{u}_i)=d$ for $i=1,\ldots,k$. Let $\Delta$ be the simplicial complex with $I=I_{\Delta}$. Then $\Delta^{\vee}$ is a Cohen-Macaulay complex over $\mathbb{K}$ by Alexander duality (see e.g. \cite[Theorem 8.1.9]{HH}), and $\mathcal{F}(\Delta^{\vee})=\{F_1,\ldots, F_k\}$ with $\mathbf{u}_i=\mathbf{x}_{\overline{F_i}}$ for $i=1,\ldots,k$. By \cite[Lemma 1.1]{DDL}, $(\Delta^{\vee})_{F_i}$ to $\Delta^{\vee}$ is a shelling move. Note that $I_i=I_{(\Delta^{\vee})_{F_i}}$, it follows from Lemma~\ref{F} that $I_i:\mathbf{u}_i$ is generated by variables.

(2) Define a map $\pi: R[-d]\rightarrow \frac{I}{I_i}$ by $\pi(f)=f\mathbf{u}_i+I_i$ for all $f\in R$. Then $\pi$ is a homogeneous epimorphism of graded modules with  $\mbox{Ker}\pi=I_i:u_i$. Hence we have the following exact sequence of graded modules \begin{equation}\label{exact}\tag{$\dag$} 0\rightarrow I_i\rightarrow I\rightarrow \frac{R}{I_i:\mathbf{u}_i}[-d]\rightarrow 0.\end{equation} Since $I_i:\mathbf{u}_i$ is generated by variables, the Koszul complex of  variables which generate $I_i:\mathbf{u}_i$ is a minimal free resolution of $\frac{R}{I_i:\mathbf{u}_i}$ and so $\reg(\frac{R}{I_i:\mathbf{u}_i})=0$. It follows that $\reg(\frac{R}{I_i:\mathbf{u}_i}[-d])=\reg(\frac{R}{I_i:\mathbf{u}_i})+d=d$.

Consider the following long exact sequence induced by the short exact sequence (\ref{exact})
$$\cdots\rightarrow \mbox{Tor}_{s+1}^R(\frac{R}{I_i:\mathbf{u}_i}[-d],\mathbb{K})_t\rightarrow \mbox{Tor}_{s}^R(I_i,\mathbb{K})_t\rightarrow  \mbox{Tor}_{s}^R(I,\mathbb{K})_t\rightarrow\cdots. $$
Thus, if $t-s>d+1$, then $\mbox{Tor}_{s+1}^R(\frac{R}{I_i:\mathbf{u}_i}[-d],\mathbb{K})_t=\mbox{Tor}_{s}^R(I,\mathbb{K})_t=0$, and this implies $\mbox{Tor}_{s}^R(I_i,\mathbb{K})_t=0$. Hence $\reg(I_i)\leq d+1$.
\end{proof}

Using the tool of polarization, we may extend the above result  to the case of arbitrary  monomial ideals. We refer \cite[Section 1.6]{HH} as a good  introduction to the polarization.

\begin{Theorem}\label{main} Let $I$ be a  monomial ideal with $G(I)=\{\mathbf{u}_1,\ldots, \mathbf{u}_k\}$. Denote by $I_i$ the ideal $(\mathbf{u}_1,\ldots, \mathbf{u}_{i-1}, \mathbf{u}_{i+1},\ldots, \mathbf{u}_k)$.  If $I$ has a $d$-linear resolution, then for all $i=1,\ldots,k$, one has

\begin{enumerate}
\item  $\reg(I_i)\leq d+1$;

  \item $I_i:\mathbf{u}_i$ is generated by variables.
  \end{enumerate}
  \end{Theorem}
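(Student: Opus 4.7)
My plan is to reduce Theorem~\ref{main} to the squarefree case already handled in Proposition~\ref{squarefree} via polarization. Let $I^{\mathrm{pol}} \subset R'$ denote the polarization of $I$, where $R'$ is obtained from $R$ by duplicating each variable $x_j$ into $x_{j,1},\ldots,x_{j,a_j}$ with $a_j$ the maximum exponent of $x_j$ appearing in $G(I)$. Write $G(I^{\mathrm{pol}}) = \{u_1^{\mathrm{pol}},\ldots,u_k^{\mathrm{pol}}\}$. I would invoke the standard fact that polarization preserves graded Betti numbers (see, e.g., \cite[Corollary 1.6.3]{HH}); in particular, $I^{\mathrm{pol}}$ has a $d$-linear resolution, and $\reg(I_i)=\reg((I_i)^{\mathrm{pol}})$ for every $i$.

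The key observation is that the ideal $(I^{\mathrm{pol}})_i$, obtained from $I^{\mathrm{pol}}$ by omitting the generator $u_i^{\mathrm{pol}}$, coincides with the polarization of $I_i$ computed inside the same ambient ring $R'$ (any duplicate variables not used by $I_i$ merely enlarge the ambient ring without affecting the Betti table). Applying Proposition~\ref{squarefree} to the squarefree ideal $I^{\mathrm{pol}}$ then yields simultaneously $\reg((I^{\mathrm{pol}})_i)\leq d+1$ and that $(I^{\mathrm{pol}})_i : u_i^{\mathrm{pol}}$ is generated by variables of $R'$. Statement (1) is immediate from $\reg(I_i)=\reg((I^{\mathrm{pol}})_i)$.

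For statement (2), I would exploit the depolarization homomorphism $\varphi\colon R'\to R$ sending each $x_{j,k}\mapsto x_j$. A direct exponent-by-exponent check gives
$$\varphi\!\left(\frac{u_l^{\mathrm{pol}}}{(u_l^{\mathrm{pol}},u_i^{\mathrm{pol}})}\right)=\frac{u_l}{(u_l,u_i)}\qquad (l\neq i),$$
so $\varphi$ carries the standard monomial generators of $(I^{\mathrm{pol}})_i : u_i^{\mathrm{pol}}$ onto those of $I_i : u_i$. Now let $u_l/(u_l,u_i)$ be any minimal monomial generator of $I_i : u_i$; its lift $u_l^{\mathrm{pol}}/(u_l^{\mathrm{pol}},u_i^{\mathrm{pol}})$ is, by the squarefree case, divisible by some variable $x_{j,k}\in (I^{\mathrm{pol}})_i : u_i^{\mathrm{pol}}$. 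Applying $\varphi$ shows that $x_j$ divides $u_l/(u_l,u_i)$ and $x_j\in I_i : u_i$; by minimality this forces $u_l/(u_l,u_i)=x_j$, whence $I_i : u_i$ is generated by variables.

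The main obstacle is purely the polarization bookkeeping: identifying $(I^{\mathrm{pol}})_i$ with the polarization of $I_i$ inside the correct ambient ring, and verifying that $\varphi$ sends the natural monomial generators of the colon ideal upstairs to those downstairs. Both are routine exponent-vector computations, and once they are carried out, the theorem follows directly from Proposition~\ref{squarefree} together with the preservation of Betti numbers under polarization.
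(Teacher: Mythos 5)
Your proof of part (1) is the same as the paper's: polarize, invoke the preservation of graded Betti numbers under polarization, identify $(I^{\mathrm{pol}})_i$ with the polarization of $I_i$, and apply Proposition~\ref{squarefree}. For part (2), however, you take a genuinely different route. The paper never depolarizes the colon ideal; instead it reuses the short exact sequence $0\to I_i\to I\to (R/(I_i:\mathbf{u}_i))[-d]\to 0$ from the proof of Proposition~\ref{squarefree}, and deduces from part (1) and the induced long exact sequence of $\mathrm{Tor}$ that $\mathrm{Tor}_{s}^R(R/(I_i:\mathbf{u}_i)[-d],\mathbb{K})_t=0$ for $t-s>d$, hence $\reg(I_i:\mathbf{u}_i)=1$ and the colon ideal is generated in degree one. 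You instead transfer statement (2) directly from the squarefree case via the depolarization map $\varphi$, using the (correct) exponent computation $\varphi\bigl(u_l^{\mathrm{pol}}/(u_l^{\mathrm{pol}},u_i^{\mathrm{pol}})\bigr)=u_l/(u_l,u_i)$ and the observation that a variable lying in $(I^{\mathrm{pol}})_i:u_i^{\mathrm{pol}}$ must equal one of these standard generators, so its image lands in $I_i:\mathbf{u}_i$. Both arguments are sound. The paper's version buys uniformity (the same homological mechanism proves (1), (2), and the squarefree case) and makes (2) a formal consequence of the regularity bound (1); yours is more elementary for (2) -- purely a divisibility argument once Proposition~\ref{squarefree}.(1) is known for the polarization -- and makes (2) independent of (1), at the cost of the bookkeeping needed to see that colon-ideal generators behave well under depolarization (a point worth stating carefully, since polarization and colon ideals do not commute in general; your argument only uses $\varphi$ on the specific generators, which is exactly why it works).
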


  \begin{proof} Let $J\subseteq T:=\mathbb{K}[x_{1,1},\ldots,x_{1,a_1}, \ldots, x_{n,1},\ldots, x_{n,a_n}]$ be the polarization of $I$ with $G(J)=\{\mathbf{v}_1,\ldots,\mathbf{v}_k\}$, where $\mathbf{v}_i$ is the polarization of $\mathbf{u}_i$ for $i=1,\ldots,k$.  By \cite[Corollary 1.6.3]{HH}, $\reg(J)=\reg(I)$ and so $J$ is a squarefree monomial ideal with a $d$-linear resolution. This implies that $\reg(J_i)\leq d+1$ by Proposition~\ref{squarefree}, where $J_i:=(\mathbf{v}_1,\cdots,\mathbf{v}_{i-1},\mathbf{v}_{i+1},\ldots,\mathbf{v}_k)$. From this and since  $J_i$ is the polarization of $I_i$, it follows that $\reg(I_i)\leq d+1$ by Proposition~\ref{squarefree}. This proves (1).

  For the proof of (2), we note that the short exact sequence (\ref{exact}) yields the following long exact sequence:  $$\cdots\rightarrow \mbox{Tor}_{s}^R(I,\mathbb{K})_t\rightarrow \mbox{Tor}_{s}^R(\frac{R}{I_i:\mathbf{u}_i}[-d],\mathbb{K})_t\rightarrow \mbox{Tor}_{s-1}^R(I_i,\mathbb{K})_t\rightarrow  \cdots. $$
  By (1), we have $\mbox{Tor}_{s-1}^R(I_i,\mathbb{K})_t=\mbox{Tor}_{s}^R(I,\mathbb{K})_t=0$ whenever $t-s>d$ and so $\mbox{Tor}_{s}^R(\frac{R}{I_i:\mathbf{u}_i}[-d],\mathbb{K})_t=0$ for $t-s>d$. This implies $\reg(I_i:\mathbf{u}_i)=1$ and thus $I_i:\mathbf{u}_i$ is generated by variables.
  \end{proof}

\begin{Example} \em Let $I$ be a monomial ideal with exactly two generators. Then it follows from Theorem~\ref{main} that $I$ has a linear resolution if and only if $I$ has linear quotients with respect to any order. For example, the ideal $(x^2,y^2)$ does not have  a linear resolution since $x^2:y^2=x^2$. On the other hand, $(x^2,xy)$ has a linear resolution as $x^2:xy=x$.  More generally, a monomial ideal $I$ of $R$ with exactly two generators has a linear resolution  if and only if there exist a monomial $\mathbf{v}$ and $i\neq j\in [n]$ such that  $I=(\mathbf{v}x_i,\mathbf{v}x_j).$
  \end{Example}

 Theorem~\ref{main} has several interesting consequences. First we  characterize monomial ideals with linear quotients by the existence   of  a sequence of monomial ideals with a linear resolution.

\begin{Corollary} \label{inter} Let $I$ be a   monomial ideal generated in a single degree. Then the following are equivalent:

  \begin{enumerate}
    \item $I$ has linear quotients;
    \item There is an ordering $\mathbf{u}_1,\ldots, \mathbf{u}_k$ of $G(I)$ such that $(\mathbf{u}_1,\mathbf{u}_2,\ldots,\mathbf{u}_i)$ has  a linear resolution for each $i=1,2,\ldots,k$.
  \end{enumerate}
  \end{Corollary}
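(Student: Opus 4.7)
The plan is to prove the two implications separately; the nontrivial direction will be a direct application of Theorem~\ref{main}.

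For $(1) \Rightarrow (2)$, assume $I$ has linear quotients with respect to the ordering $\mathbf{u}_1,\ldots,\mathbf{u}_k$. The linear quotients property is preserved under taking an initial segment of the given order, so each truncated ideal $(\mathbf{u}_1,\ldots,\mathbf{u}_i)$ inherits linear quotients. Since $I$ is equigenerated in some degree $d$, so is each such truncation. The standard sufficient condition \cite[Proposition 8.2.1]{HH} then implies that $(\mathbf{u}_1,\ldots,\mathbf{u}_i)$ has a $d$-linear resolution for every $i$.

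For $(2) \Rightarrow (1)$, fix an ordering $\mathbf{u}_1,\ldots,\mathbf{u}_k$ of $G(I)$ such that every $J^{(i)} := (\mathbf{u}_1,\ldots,\mathbf{u}_i)$ has a linear resolution. It suffices to show that $(\mathbf{u}_1,\ldots,\mathbf{u}_{i-1}):\mathbf{u}_i$ is generated by variables for each $i = 2,\ldots,k$. Fix such an $i$. First I would check that $G(J^{(i)}) = \{\mathbf{u}_1,\ldots,\mathbf{u}_i\}$: since $I$ is generated in a single degree and the $\mathbf{u}_j$ are pairwise distinct minimal generators of $I$, no $\mathbf{u}_j$ with $j < i$ can divide $\mathbf{u}_i$, hence $\mathbf{u}_i \notin (\mathbf{u}_1,\ldots,\mathbf{u}_{i-1})$. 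With this verified, applying Theorem~\ref{main}(2) to $J^{(i)}$ with distinguished generator $\mathbf{u}_i$ yields exactly that $(\mathbf{u}_1,\ldots,\mathbf{u}_{i-1}):\mathbf{u}_i$ is generated by variables, as required.

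The entire substance of the corollary is already packaged inside Theorem~\ref{main}; there is no real obstacle beyond recognizing that the hypothesized linear resolutions apply to ideals whose minimal generating sets are precisely the initial segments $\{\mathbf{u}_1,\ldots,\mathbf{u}_i\}$. The only point that warrants care is the equigenerated hypothesis, which is used in both directions: in the forward direction to ensure that the truncations remain generated in a single degree (so that the linear-quotients-to-linear-resolution implication of \cite[Proposition 8.2.1]{HH} applies), and in the reverse direction to guarantee that $\mathbf{u}_i$ survives as a minimal generator of each $J^{(i)}$.
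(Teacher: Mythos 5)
Your proposal is correct and follows exactly the paper's route: the forward direction is the standard linear-quotients-implies-linear-resolution argument from \cite[Proposition 8.2.1]{HH} applied to each initial segment, and the reverse direction is a direct application of Theorem~\ref{main}(2) to each truncated ideal. The paper states this in one line; your write-up merely fills in the (correct and worthwhile) detail that each $\{\mathbf{u}_1,\ldots,\mathbf{u}_i\}$ is indeed the minimal generating set of $J^{(i)}$ because the generators are equigenerated.
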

  \begin{proof} $(1)\Rightarrow (2)$ is well-known, see e.g. the proof of \cite[Proposition 8.2.1]{HH} and  $(2)\Rightarrow (1)$ follows directly from Theorem~\ref{main}.
  \end{proof}

To state other consequences, we need some more notation and definitions.

\begin{Notation}\em  If $I$ is a monomial ideal with $\mathbf{u}\in G(I)$, we use $I\setminus_{\mathbf{u}}$ to denote the monomial ideal  generated by monomials in $G(I)\setminus \{\mathbf{u}\}$.
\end{Notation}

\begin{Definition}\em  \label{critical} We call a monomial ideal $I$ to be a {\it critical linear ideal} if $I$ has a linear resolution, but $I\setminus_{\mathbf{u}}$  has not a linear resolution for each $\mathbf{u}\in G(I)$. By convention, the zero ideal is a critical linear ideal.
\end{Definition}

\begin{Definition} \em  \label{linear}
 Let $J\subseteq I$ be monomial ideal.  We say $I$ is  {\it linear over} $J$ if there exist $r>0$ and monomials $\mathbf{u}_1,\ldots,\mathbf{u}_r$ such that $G(I)=G(J)\cup \{ \mathbf{u}_1,\ldots,\mathbf{u}_r\}$ and $(J,\mathbf{u}_1,\ldots,\mathbf{u}_{i-1}):\mathbf{u}_i$ is generated by linear forms for $i=1,\ldots,r$.
\end{Definition}

\begin{Corollary} \label{minimal} Let $I$ be a monomial ideal generated in a single degree. Then $I$  has a linear resolution if and only if there is a critical linear ideal $J$ such that $I$ is linear over  $J$.
\end{Corollary}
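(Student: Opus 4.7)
The plan is to prove each implication by an induction, with Theorem~\ref{main} doing the essential work.

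For $(\Leftarrow)$, suppose $J$ is critical linear with a $d$-linear resolution and $I$ is linear over $J$ via monomials $u_1,\ldots,u_r$. Write $I_0=J$ and $I_i=(J,u_1,\ldots,u_i)$; each $u_i$ has degree $d$ since $I$ is generated in a single degree. I would induct on $i$: from the short exact sequence
$$0\to I_{i-1}\to I_i\to \bigl(R/(I_{i-1}:u_i)\bigr)[-d]\to 0,$$
together with the fact that $I_{i-1}:u_i$ is generated by variables---so the Koszul complex on those variables is a minimal free resolution of $R/(I_{i-1}:u_i)$, giving $\reg(R/(I_{i-1}:u_i))=0$ and hence regularity $d$ after the shift---the long exact sequence in $\Tor$ combined with the inductive hypothesis $\reg(I_{i-1})=d$ forces $\beta_{s,t}(I_i)=0$ whenever $t-s\neq d$. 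Thus $I=I_r$ has a $d$-linear resolution. This is precisely the $\Tor$-chasing argument already used in the proof of Proposition~\ref{squarefree}(2).

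For $(\Rightarrow)$, I would induct on $|G(I)|$. If $I$ is itself critical linear, take $J:=I$ (the empty chain, $r=0$). Otherwise, by Definition~\ref{critical} there exists $\mathbf{u}\in G(I)$ with $I\setminus_{\mathbf{u}}$ still possessing a linear resolution, and by Theorem~\ref{main}(2) applied to $I$ the colon $(I\setminus_{\mathbf{u}}):\mathbf{u}$ is generated by variables. Applying the induction hypothesis to $I\setminus_{\mathbf{u}}$, which has strictly fewer generators, yields a critical linear ideal $J\subseteq I\setminus_{\mathbf{u}}$ together with a chain $u_1,\ldots,u_{r-1}$ witnessing that $I\setminus_{\mathbf{u}}$ is linear over $J$. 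Setting $u_r:=\mathbf{u}$ extends this chain to witness that $I$ is linear over $J$, since the new final colon $(J,u_1,\ldots,u_{r-1}):u_r=(I\setminus_{\mathbf{u}}):\mathbf{u}$ is generated by linear forms by construction.

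The only subtlety is the edge case where $I$ is itself critical linear: a strict reading of Definition~\ref{linear} requires $r>0$, but no critical linear $J\subsetneq I$ can serve as witness, since by the $(\Leftarrow)$ direction already established such a witness would force $I\setminus_{u_r}$ to have a linear resolution, contradicting criticality of $I$. The natural remedy---presumably intended by the corollary---is to allow the empty chain $r=0$ corresponding to $J=I$; no serious mathematical obstacle arises beyond this cosmetic point.
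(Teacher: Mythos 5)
Your proof is correct and follows essentially the same route as the paper: the paper's own (two-line) proof derives necessity from Theorem~\ref{main} exactly as in your induction on $|G(I)|$, and for sufficiency it offers two options, one of which is precisely your $\Tor$-chasing argument via the short exact sequence $0\to I_{i-1}\to I_i\to \bigl(R/(I_{i-1}:\mathbf{u}_i)\bigr)[-d]\to 0$ (the other being Proposition~\ref{simple}). Your observation about the $r>0$ edge case when $I$ is itself critical linear is a legitimate and correctly resolved point that the paper glosses over.
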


\begin{proof} The necessity follows from Theorem~\ref{main} and the sufficiency  follows either  by  Proposition~\ref{simple} or by the following observation:   if $I_i$  has a linear resolution and $I_i:\mathbf{u}$ is generated by variables in the short exact sequence  (\ref{exact}), then $I$ has a linear resolution.
\end{proof}

The result above suggests that the study of monomial ideals with linear resolutions boils down  to the study of critical linear monomial ideals, which has the following property.
\begin{Corollary} Let $I$ be a monomial ideal generated in degree $d$. Then the following are equivalent:

\begin{enumerate}
  \item $I$ is a critical linear ideal;
  \item $\reg(I)=d$  and $\reg(I\setminus_{\mathbf{u}})=d+1$ for all $\mathbf{u}\in G(I)$.
\end{enumerate}

\end{Corollary}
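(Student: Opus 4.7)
The plan is to reduce both conditions to statements about regularity and then invoke Theorem~\ref{main}(1) as the only non-trivial input. Recall the standard fact, recorded in Proposition~\ref{...} of Section 1, that an ideal generated in a single degree $d$ has a $d$-linear resolution if and only if its regularity equals $d$. Since $G(I)$ consists of monomials of degree $d$, every ideal $I\setminus_{\mathbf{u}}$ is also generated in degree $d$, so the existence of a linear resolution for $I\setminus_{\mathbf{u}}$ is equivalent to $\reg(I\setminus_{\mathbf{u}})=d$.

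For the implication $(1)\Rightarrow (2)$, I would argue as follows. If $I$ is critical linear, then $I$ has a $d$-linear resolution, hence $\reg(I)=d$. Moreover, for each $\mathbf{u}\in G(I)$ the ideal $I\setminus_{\mathbf{u}}$ fails to have a linear resolution, so $\reg(I\setminus_{\mathbf{u}})\neq d$, and since the regularity is at least the generating degree we have $\reg(I\setminus_{\mathbf{u}})\geq d+1$. Theorem~\ref{main}(1), applied with the omitted generator being $\mathbf{u}$, gives the reverse inequality $\reg(I\setminus_{\mathbf{u}})\leq d+1$. Combining these two bounds forces $\reg(I\setminus_{\mathbf{u}})=d+1$, as desired.

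For the implication $(2)\Rightarrow (1)$, the hypothesis $\reg(I)=d$ together with the fact that $I$ is generated in degree $d$ immediately yields that $I$ has a $d$-linear resolution. For each $\mathbf{u}\in G(I)$, the ideal $I\setminus_{\mathbf{u}}$ is generated in degree $d$ but satisfies $\reg(I\setminus_{\mathbf{u}})=d+1\neq d$, which precludes the existence of a linear resolution. Hence $I$ is critical linear by Definition~\ref{critical}.

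No serious obstacle appears here: the upper bound $\reg(I\setminus_{\mathbf{u}})\leq d+1$, which is the only piece of genuine content, is already proved as Theorem~\ref{main}(1), and the rest is just the dichotomy between $\reg=d$ and $\reg>d$ for ideals generated in degree $d$.
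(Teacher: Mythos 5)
Your proof is correct and is exactly the argument the paper intends: the paper states this corollary without proof, as an immediate consequence of Theorem~\ref{main}(1) (which gives $\reg(I\setminus_{\mathbf{u}})\leq d+1$) combined with the standard equivalence, for an ideal generated in a single degree $d$, between having a $d$-linear resolution and having regularity equal to $d$. Nothing further is needed.
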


\begin{Example} \em Let $I$ be a monomial ideal  with $|G(I)|=3$. If $I$ has a linear resolution, but has not linear quotients, then $I$ is a critical linear ideal.
\end{Example}

We are led to the following definition by Theorem~\ref{main}.

  \begin{Definition} \label{weakly}\em We call a monomial ideal $I$ to be {\it quasi-linear } if  the colon ideal  $I\setminus_{\mathbf{u}}: \mathbf{u}$  is generated by linear forms (i.e., variables) for every $\mathbf{u}\in G(I)$.
  \end{Definition}

  By Theorem~\ref{main}, if a monomial ideal has a linear resolution then it is quasi-linear. But the converse is not true since whether a monomial ideal has a linear resolution depends on the characteristic of the base field $\mathbb{K}$. We may see this more clear by characterizing all the quadratic monomial ideals which are quasi-linear .

We refer basic notions and facts about graphs we need to \cite[Chapter 9]{HH}. Let $I$ be a monomial ideal of $R$ generated in  degree 2. We associate to it a simple graph $G_I$, which is defined as follows: $V(G_I)=[n]\cup \{\underline{i}\:\; x_i^2\in I\}$ and $E(G_I)=\{\{i,j\}\:\; x_ix_j\in I \mbox{ and } i\neq j\}\cup \{\{i,\underline{i}\}\:\; x_i^2\in I\}$.  Recall for a simple graph $G$, a subset $M\subseteq E(G)$ is called a {\it matching} of $G$ if $e\cap e'=\emptyset$ for any distinct $e,e'$ belonging to $M$, and a matching is called an {\it induced matching} if these pairwise disjoint edges form an induced subgraph of $G$.  The induced matching number of $G$, denoted by  $\mathrm{indmat}(G)$, is the maximum  cardinality of induced matchings of $G$.

\begin{Proposition} \label{quadratic} Let  $I$ be a monomial ideal of $R$ generated in degree 2. Then $I$ is quasi-linear  if and only if $\mathrm{indmat}(G_I)=1$.
\end{Proposition}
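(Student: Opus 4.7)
The plan is to translate quasi-linearity directly into the graph condition via the bijection between $G(I)$ and the edges of $G_I$, under which a squarefree generator $x_ix_j$ corresponds to the edge $\{i,j\}$ and a square generator $x_i^2$ corresponds to the edge $\{i,\underline{i}\}$. First I would fix $\mathbf{u}\in G(I)$ and use the standard formula
$$I\setminus_{\mathbf{u}}:\mathbf{u}=(v/(v,\mathbf{u})\;:\;v\in G(I\setminus_{\mathbf{u}})).$$
Since $\deg v=2$, the element $v/(v,\mathbf{u})$ is already a single variable whenever $(v,\mathbf{u})\neq 1$. Hence the only obstruction to $I\setminus_{\mathbf{u}}:\mathbf{u}$ being generated by variables comes from those $v\in G(I\setminus_{\mathbf{u}})$ coprime to $\mathbf{u}$: for each such $v$, one needs some variable $x_m$ dividing $v$ to lie in the colon.

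Next I would unravel when $x_m\in I\setminus_{\mathbf{u}}:\mathbf{u}$ for $m\in\mathrm{supp}(v)$. Writing $\mathbf{u}=x_ix_j$ with $i,j$ possibly equal, this amounts to the existence of some $w\in G(I)\setminus\{\mathbf{u}\}$ dividing $x_ix_jx_m$. Since $\deg w=2$ and $m\notin\{i,j\}$, the only candidates other than $\mathbf{u}$ are $x_ix_m$ and $x_jx_m$, corresponding to the edges $\{i,m\}$ and $\{j,m\}$ of $G_I$. Writing $e_{\mathbf{u}},e_v$ for the edges associated with $\mathbf{u},v$, it follows that quasi-linearity of $I$ is equivalent to the combinatorial statement: for every pair of coprime generators $\mathbf{u},v$, some edge of $G_I$ joins a vertex of $e_{\mathbf{u}}$ to a vertex of $e_v$.

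Finally I would identify this condition with $\mathrm{indmat}(G_I)=1$. The essential bookkeeping is that each auxiliary vertex $\underline{i}$ belongs to a unique edge $\{i,\underline{i}\}$ of $G_I$. From this one deduces two facts: first, coprimality of $\mathbf{u}$ and $v$ is equivalent to vertex-disjointness of $e_{\mathbf{u}}$ and $e_v$ in $G_I$; second, any edge of $G_I$ joining two vertex-disjoint edges must have both endpoints in $[n]$, so the restriction to squarefree ``connecting'' edges in the previous step loses no information. Together these turn the condition into: no two edges of $G_I$ form an induced matching. Since $I\neq 0$, $G_I$ has at least one edge and $\mathrm{indmat}(G_I)\geq 1$, so this is equivalent to $\mathrm{indmat}(G_I)=1$. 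The main obstacle is the case distinction between squarefree and square generators together with the care needed for the auxiliary vertices $\underline{i}$; once handled, the proof becomes a clean dictionary between algebra and graph theory.
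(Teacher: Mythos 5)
Your proposal is correct and takes essentially the same route as the paper: both arguments come down to identifying the degree-two minimal generators of $I\setminus_{\mathbf{u}}:\mathbf{u}$ with generators $v$ coprime to $\mathbf{u}$ that admit no ``connecting'' generator, and matching this with induced matchings of size two in $G_I$. The paper organizes this as an explicit case analysis on squarefree versus square generators in each direction, whereas you package it as a single dictionary (correctly handling the auxiliary vertices $\underline{i}$), but the underlying computation is identical.
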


\begin{proof} If $\mathrm{indmat}(G_I)\geq 2$, there is an induced subgraph of $G_I$ consisting of exactly two disjoint  edges $e_1$ and $e_2$. By symmetry, there are actually three cases to consider:

(1)  $e_1=\{1,2\}$ and $e_2=\{3,4\}$. We claim $x_3x_4$ is a minimal monomial generator of $I\setminus_{x_1x_2}:x_1x_2$. First we note that $x_3x_4\in I\setminus_{x_1x_2}:x_1x_2$. If it is not a minimal monomial generator, we have either $x_3$ or $x_4$ belongs to $I\setminus_{x_1x_2}:x_1x_2$, and so at least one of monomials $x_1x_3,x_1x_4,x_2x_3,x_2x_4$ belongs to $I$. This is impossible since $e_1,e_3$ form an induced graph of $G_I$ and so we proves the claim. From this it follows that $I$ is not quasi-linear.

(2) $e_1=\{1,\underline{1}\}$ and $e_2=\{2,\underline{2}\}$.  As in the first case, we can prove that   $x_1^2$ is a minimal monomial generator of $I\setminus_{x_2^2}:x_2^2$ and so $I$ is not quasi-linear.

(3)   $e_1=\{1,2\}$ and $e_2=\{3,\underline{3}\}$. In this case we have  $x_3^2$ is a minimal monomial generator of $I\setminus_{x_1x_2}:x_1x_2$ and so $I$ is not quasi-linear.

Thus, $I$ is not quasi-linear since all the possible cases are considered.

Conversely, suppose that  $I$ is not quasi-linear. Then there is a monomial in $G(I)$, say $\mathbf{u}$, such that $I\setminus_{\mathbf{u}}:\mathbf{u}$ has a minimal monomial generator of degree 2, say $\mathbf{v}$.  It is clear that $\mathbf{v}\in I$.  By symmetry, we only need to consider the following four cases.

 (1) $\mathbf{u}=x_3^2$ and $\mathbf{v}=x_1x_2$. We claim the edges $\{1,2\}$ and $\{3,\underline{3}\}$ form an induced subgraph of $G_I$. In fact, if this is not the case, then either $\{1,3\}$ or $\{2,3\}$ are edges of $G_I$. This means either $x_1x_3$ or $x_2x_3$ belongs to $I$ and so we have $x_1$ or $x_2$ belongs to $I\setminus_{\mathbf{u}}:\mathbf{u}$. This is a contradiction and so the claim has been proved. From this it follows that  $\mathrm{indmatch}(G_I)\geq 2$.

We   now  consider   the other cases: (2) $\mathbf{u}=x_1x_2$ and $\mathbf{v}=x_3x_4$, (3)  $\mathbf{u}=x_1x_2$ and   $\mathbf{v}=x_3^2$ and (4) $\mathbf{u}=x_1^2$ and $\mathbf{v}=x_2^2$.  In a similar way as in (1)  we obtain that $\mathrm{indmatch}(G_I)\geq 2$. Thus  the proof is complete.
\end{proof}
By the definition of $G_I$, it is easy to see that $G_I$ is isomorphic to the graph $G_{I^{\mathrm{p}}}$, where $I^{\mathrm{p}}$ denotes the polarization of $I$. Thus the following is immediate.

\begin{Corollary} \label{P}Let  $I$ be a monomial ideal of $R$ generated in degree 2. Then $I$ is quasi-linear  if and only if its polarization $I^{\mathrm{p}}$ is quasi-linear .
\end{Corollary}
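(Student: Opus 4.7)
The plan is to reduce the statement to Proposition~\ref{quadratic}, which characterizes quasi-linearity of quadratic monomial ideals in terms of the induced matching number of the associated graph. Since polarization preserves degrees, $I^{\mathrm{p}}$ is again generated in degree $2$, so the proposition applies to both $I$ and $I^{\mathrm{p}}$. Thus it suffices to exhibit a graph isomorphism $\phi\colon G_I \to G_{I^{\mathrm{p}}}$, which will immediately give $\mathrm{indmat}(G_I)=\mathrm{indmat}(G_{I^{\mathrm{p}}})$ and hence the desired equivalence.

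First I would write down the explicit map on vertices: send $i\in[n]$ to the polarization variable $x_{i,1}$, and send $\underline{i}$ (which belongs to $V(G_I)$ precisely when $x_i^2\in I$) to $x_{i,2}$. By the very recipe for polarizing an ideal generated in degree~$2$, a second copy $x_{i,2}$ is introduced exactly when $x_i^2\in I$, so $\phi$ is a well-defined bijection on the set of vertices that carry an edge (possibly up to harmless isolated vertices, which do not affect induced matchings).

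Next I would verify that edges correspond. The two types of edges of $G_I$ are $\{i,j\}$ with $i\neq j$, recording $x_ix_j\in I$, and $\{i,\underline{i}\}$, recording $x_i^2\in I$. Under polarization, $x_ix_j$ is its own polarization, hence corresponds to the edge $\{x_{i,1},x_{j,1}\}$ of $G_{I^{\mathrm{p}}}$; while $x_i^2$ polarizes to $x_{i,1}x_{i,2}$, giving the edge $\{x_{i,1},x_{i,2}\}$ of $G_{I^{\mathrm{p}}}$. In each case the map matches edges in both directions, so $\phi$ is a graph isomorphism.

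With the isomorphism in hand, Proposition~\ref{quadratic} applied to both $I$ and $I^{\mathrm{p}}$ yields
\[
I\text{ is quasi-linear}\iff \mathrm{indmat}(G_I)=1 \iff \mathrm{indmat}(G_{I^{\mathrm{p}}})=1 \iff I^{\mathrm{p}}\text{ is quasi-linear},
\]
finishing the proof. The only genuinely subtle point is the bookkeeping of isolated vertices in the two graphs (e.g.\ variables appearing in the ambient ring but in no generator), but since these contribute nothing to $\mathrm{indmat}$ they are not a real obstacle.
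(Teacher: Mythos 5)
Your proof is correct and follows exactly the paper's route: the paper also observes that $G_I\cong G_{I^{\mathrm{p}}}$ by construction and then invokes Proposition~\ref{quadratic}; you have merely spelled out the vertex map and the edge correspondence that the paper leaves implicit.
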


We do not know if Corollary~\ref{P} is true for monomial ideals generated in degree more than two. The squarefree version of Corollary~\ref{quadratic} is as follows.

\begin{Corollary} \label{chordal} Let $G$ be  a simple graph. Then $I(G)$ is quasi-linear  if and only if $\overline{G}$ contains no induced 4-cycles.
\end{Corollary}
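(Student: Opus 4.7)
The plan is to deduce this from Proposition~\ref{quadratic} by a short combinatorial translation. Since $I(G)$ is squarefree, no generator of the form $x_i^2$ appears, so the auxiliary loop-vertex set $\{\underline{i}\:\; x_i^2\in I(G)\}$ in the definition of $G_{I(G)}$ is empty, and an inspection of the remaining edge set gives $G_{I(G)}=G$. Hence Proposition~\ref{quadratic} directly identifies quasi-linearity of $I(G)$ with the condition $\mathrm{indmat}(G)\leq 1$.

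It therefore suffices to show $\mathrm{indmat}(G)\geq 2$ if and only if $\overline{G}$ contains an induced $4$-cycle. Suppose $\{a,b\}$ and $\{c,d\}$ are disjoint edges of $G$ forming an induced matching; this means these two pairs lie in $E(G)$ while the four cross-pairs $\{a,c\},\{a,d\},\{b,c\},\{b,d\}$ are non-edges of $G$. Taking complements on the vertex set $\{a,b,c,d\}$, this is equivalent to saying that $\{a,b\}$ and $\{c,d\}$ are non-edges of $\overline{G}$ while the four cross-pairs are edges of $\overline{G}$, which is exactly the statement that the induced subgraph of $\overline{G}$ on $\{a,b,c,d\}$ is the $4$-cycle with edge set $\{\{a,c\},\{c,b\},\{b,d\},\{d,a\}\}$. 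The correspondence is plainly reversible, so the two conditions are equivalent.

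Combining the two equivalences yields the corollary. With Proposition~\ref{quadratic} already available, the entire content of the argument is the elementary bijection between induced $2$-matchings in $G$ and induced $4$-cycles in $\overline{G}$, so no substantive obstacle remains.
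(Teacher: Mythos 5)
Your proof is correct and follows the same route as the paper: both reduce to Proposition~\ref{quadratic} via the identification $G_{I(G)}=G$ and then use the equivalence between an induced $2$-matching in $G$ and an induced $4$-cycle in $\overline{G}$. The only difference is that you write out the elementary complementation argument that the paper cites as a known fact.
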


\begin{proof} It follows from  the fact that $\mathrm{indmatch}(G)\geq 2$ if and only if $\overline{G}$ contained an induced 4-cycle.
\end{proof}

  Let $\mathfrak{m}$ be the maximal monomial  ideal  $(x_1,\ldots,x_n)$ of $R$, and $d$ an integer $\geq 2$. Then $\mathfrak{m}^d$ has a linear resolution since it has linear quotients with respect to the lex order.  We now consider  when $\mathfrak{m}^d\setminus_{\mathbf{u}}$  is quasi-linear, where $\mathbf{u}$ is a monomial of degree $d$.
We first deal with the case when $n=2$.

\begin{Lemma} \label{n=2} If $n=2$, we write $\mathfrak{m}=(x,y)$. Then $\mathfrak{m}^d\setminus_{\mathbf{u}}$ is not quasi-linear  for any monomial $\mathbf{u}=x^ay^b$ with $a>0,b>0$ and $a+b=d$.
\end{Lemma}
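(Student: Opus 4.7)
The plan is to produce one explicit $\mathbf{v}\in G(\mathfrak{m}^d\setminus_{\mathbf{u}})$ for which the colon $(\mathfrak{m}^d\setminus_{\mathbf{u}})\setminus_{\mathbf{v}}:\mathbf{v}$ is not generated by variables; by Definition~\ref{weakly} this forces $\mathfrak{m}^d\setminus_{\mathbf{u}}$ to fail quasi-linearity. Writing $\mathbf{u}=x^ay^b$ with $a,b\geq 1$, the natural candidate is the immediate neighbour of $\mathbf{u}$ on the degree-$d$ staircase in the direction of $x$, namely $\mathbf{v}=x^{a+1}y^{b-1}$. Since $b\geq 1$, $\mathbf{v}$ is a degree-$d$ monomial with nonnegative exponents and is visibly distinct from $\mathbf{u}$, so $\mathbf{v}\in G(\mathfrak{m}^d\setminus_{\mathbf{u}})$.

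Set $I=\mathfrak{m}^d\setminus_{\mathbf{u}}$ and $J=I\setminus_{\mathbf{v}}$. The minimal generators of $J$ are exactly $\{x^iy^{d-i}:0\leq i\leq d,\ i\neq a,a+1\}$. I will show that $y^2\in J:\mathbf{v}$ while $y\notin J:\mathbf{v}$. For the containment $y^2\in J:\mathbf{v}$, compute $y^2\mathbf{v}=x^{a+1}y^{b+1}$, which is divisible by $x^{a-1}y^{b+1}$; since $a\geq 1$, this latter monomial has nonnegative exponents, total degree $d$, and $x$-exponent $a-1\notin\{a,a+1\}$, so it belongs to $G(J)$, and hence $y^2\mathbf{v}\in J$. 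For the non-containment, suppose $y\mathbf{v}=x^{a+1}y^b$ lay in $J$; then some generator $x^iy^{d-i}$ with $i\neq a,a+1$ would divide it, forcing $i\leq a+1$ and $d-i\leq b$, i.e.\ $a\leq i\leq a+1$, which contradicts $i\notin\{a,a+1\}$.

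Because $x\nmid y^2$, the element $y^2$ is a \emph{minimal} monomial generator of $J:\mathbf{v}$, so $J:\mathbf{v}$ is not generated by linear forms, completing the proof.

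Main obstacle: there is essentially none—the whole argument is a direct divisibility check once the right neighbour of $\mathbf{u}$ on the staircase is chosen. The only point requiring care is the use of both hypotheses $a\geq 1$ and $b\geq 1$: the bound $b\geq 1$ is what makes $\mathbf{v}=x^{a+1}y^{b-1}$ available in $G(I)$, and the bound $a\geq 1$ is what makes the witness $x^{a-1}y^{b+1}$ available in $G(J)$. (Symmetrically, the choice $\mathbf{v}'=x^{a-1}y^{b+1}$ would instead produce $x^2$ as a minimal generator of the colon; either choice uses both inequalities.)
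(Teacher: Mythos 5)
Your proof is correct and follows essentially the same route as the paper: the paper takes the mirror-image neighbour $\mathbf{v}=x^{a-1}y^{b+1}$ and shows $x^{b+1}$ lies in the colon while $x$ does not, whereas you take $\mathbf{v}=x^{a+1}y^{b-1}$ and exhibit $y^{2}$ as a minimal generator of the colon; these are symmetric instances of one and the same divisibility argument. Your version is, if anything, slightly cleaner in pinning down a degree-$2$ minimal generator explicitly.
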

\begin{proof} Set $I=\mathfrak{m}^d\setminus_{\mathbf{u}}$ and let $\mathbf{v}=x^{a-1}y^{b+1}$.   Then $x^{d-a+1}=x^{b+1}\in I\setminus_{\mathbf{v}}:\mathbf{v}$ but $x\notin I\setminus_{\mathbf{v}}:\mathbf{v}$. This implies $I\setminus_{\mathbf{v}}:\mathbf{v}$ is not generated by variables and so $I$ is not quasi-linear .
\end{proof}

\begin{Proposition} Let $I=\mathfrak{m}^d$ and $\mathbf{u}\in G(I)$. Then $I\setminus_{\mathbf{u}}$ is quasi-linear  if and only if $|\mathrm{supp}(\mathbf{u})|\neq 2$.
\end{Proposition}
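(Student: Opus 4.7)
The plan is to check the defining condition of quasi-linearity directly, by computing, for each $\mathbf{w}\in G(I')$ (where $I'=\mathfrak{m}^d\setminus_{\mathbf{u}}$), the colon ideal $K_{\mathbf{w}}:=(I'\setminus_{\mathbf{w}}):\mathbf{w}$, whose generators are the quotients $\mathbf{u}'/\gcd(\mathbf{u}',\mathbf{w})$ as $\mathbf{u}'$ ranges over $G(\mathfrak{m}^d)\setminus\{\mathbf{u},\mathbf{w}\}$. A useful first observation is that the larger colon $L_{\mathbf{w}}:=(\mathfrak{m}^d\setminus_{\mathbf{w}}):\mathbf{w}$ is always generated by variables: a short check shows $L_{\mathbf{w}}=\mathfrak{m}$ when $|\mathrm{supp}(\mathbf{w})|\geq 2$ and $L_{\mathbf{w}}=(x_j:j\neq k)$ when $\mathbf{w}=x_k^d$. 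So $K_{\mathbf{w}}\subseteq L_{\mathbf{w}}$ already sits inside an ideal generated by variables, and the whole question is whether deleting the single generator $\mathbf{u}$ from $\mathfrak{m}^d$ can destroy enough witnesses to force a non-linear minimal generator in $K_{\mathbf{w}}$.

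For the ``only if'' direction I lift Lemma~\ref{n=2} to $n$ variables. Writing $\mathbf{u}=x_i^ax_j^b$ with $a,b\geq 1$ and $a+b=d$, I pick $\mathbf{v}=x_i^{a-1}x_j^{b+1}\in G(I')$ and verify two things. First, $x_i^{b+1}\in K_{\mathbf{v}}$: the generator $x_i^d\in G(\mathfrak{m}^d)\setminus\{\mathbf{u},\mathbf{v}\}$ divides $x_i^{b+1}\mathbf{v}=x_i^d x_j^{b+1}$, using $b\geq 1$ to conclude $x_i^d\neq \mathbf{u}$. Second, $x_i\notin K_{\mathbf{v}}$: the monomial $x_i\mathbf{v}=x_i^ax_j^{b+1}$ is supported on $\{i,j\}$, and a direct enumeration of its degree-$d$ divisors yields only $\mathbf{u}$ and $\mathbf{v}$. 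Consequently a minimal generator of $K_{\mathbf{v}}$ is a pure power $x_i^c$ with $c\geq 2$, and $I'$ is not quasi-linear.

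For the ``if'' direction, I analyze precisely when a variable $x_k$ fails to lie in $K_{\mathbf{w}}$: this happens iff every degree-$d$ divisor of $x_k\mathbf{w}$ distinct from $\mathbf{w}$ equals $\mathbf{u}$. Since such divisors are exactly the monomials $x_k\mathbf{w}/x_{\ell}$ for $\ell\in\mathrm{supp}(x_k\mathbf{w})\setminus\{k\}$, the obstruction $x_k\mathbf{w}/x_{\ell}=\mathbf{u}$ forces $\mathbf{w}=\mathbf{u}\cdot x_{\ell}/x_k$; in particular $k\in\mathrm{supp}(\mathbf{u})$. A brief case split on whether the $x_k$-exponent in $\mathbf{u}$ is $1$ or $\geq 2$ shows $|\mathrm{supp}(\mathbf{w})|\in\{|\mathrm{supp}(\mathbf{u})|-1,\,|\mathrm{supp}(\mathbf{u})|,\,|\mathrm{supp}(\mathbf{u})|+1\}$. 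When $|\mathrm{supp}(\mathbf{u})|\geq 3$, combining this with the hypothesis that $\mathbf{u}$ is the unique potential witness (equivalently, $|\mathrm{supp}(\mathbf{w})\setminus\{k\}|\leq 1$) produces a contradiction, so a second witness always exists and $K_{\mathbf{w}}=L_{\mathbf{w}}$.

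The remaining case is $|\mathrm{supp}(\mathbf{u})|=1$, i.e., $\mathbf{u}=x_i^d$. Here $K_{\mathbf{w}}$ can genuinely be a proper subset of $L_{\mathbf{w}}$, but only in one family: the analysis of the previous paragraph shows $x_i\notin K_{\mathbf{w}}$ happens exactly when $\mathbf{w}=x_i^{d-1}x_{\ell}$ with $\ell\neq i$, while every other variable remains in $K_{\mathbf{w}}$. For such $\mathbf{w}$ I verify directly that no $\mathbf{u}'\in G(\mathfrak{m}^d)\setminus\{\mathbf{u},\mathbf{w}\}$ can produce a pure power of $x_i$ as $\mathbf{u}'/\gcd(\mathbf{u}',\mathbf{w})$ (writing $\gcd(\mathbf{u}',\mathbf{w})=x_i^ax_{\ell}^b$ and solving forces $\mathbf{u}'\in\{\mathbf{u},\mathbf{w}\}$), concluding $K_{\mathbf{w}}=(x_j:j\neq i)$, which is generated by variables. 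I expect the bookkeeping in this paragraph to be the main obstacle: tracking which witnesses survive the removal of $\mathbf{u}$, and making the support computation tight enough that the $|\mathrm{supp}(\mathbf{u})|=2$ case emerges as the sole obstruction to quasi-linearity.
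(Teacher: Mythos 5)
Your proposal is correct, and it is the same elementary strategy as the paper's -- a direct computation of the colon ideals $(I'\setminus_{\mathbf{w}}):\mathbf{w}$ via the quotients $\mathbf{u}'/\gcd(\mathbf{u}',\mathbf{w})$ -- but the details are organized quite differently, and your organization is arguably tighter. For the ``only if'' direction the paper first proves a two-variable lemma (Lemma~\ref{n=2}) and then lifts the bad generator to $n$ variables by a support argument; you instead exhibit the witness $\mathbf{v}=x_i^{a-1}x_j^{b+1}$ and the obstruction $x_i^{b+1}\in K_{\mathbf{v}}$, $x_i\notin K_{\mathbf{v}}$ directly in the full ring, which collapses two steps into one. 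For $|\mathrm{supp}(\mathbf{u})|=1$ the paper invokes the fact that $\mathfrak{m}^d\setminus_{x_i^d}$ has a linear resolution (Example~\ref{one}) and then Theorem~\ref{main}; your direct verification that $K_{\mathbf{w}}=(x_j:j\neq i)$ for $\mathbf{w}=x_i^{d-1}x_\ell$ and $K_{\mathbf{w}}=L_{\mathbf{w}}$ otherwise keeps the proof self-contained. For $|\mathrm{supp}(\mathbf{u})|\geq 3$ the paper splits on $|\mathrm{supp}(\mathbf{v})|\in\{1\}$ and $\{\geq 3\}$ (leaving the $|\mathrm{supp}(\mathbf{v})|=2$ case implicit), whereas your uniform witness-counting observation --- $x_k\notin K_{\mathbf{w}}$ forces every $x_k\mathbf{w}/x_\ell$, $\ell\in\mathrm{supp}(\mathbf{w})\setminus\{k\}$, to equal $\mathbf{u}$, hence $|\mathrm{supp}(\mathbf{w})\setminus\{k\}|=1$ while $\mathrm{supp}(\mathbf{w})\setminus\{k\}\supseteq\mathrm{supp}(\mathbf{u})\setminus\{k\}$ has at least two elements --- covers all subcases at once. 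The one place where your sketch is compressed is exactly that last contradiction: the support-size range $\{|\mathrm{supp}(\mathbf{u})|-1,\,|\mathrm{supp}(\mathbf{u})|,\,|\mathrm{supp}(\mathbf{u})|+1\}$ alone only rules out $|\mathrm{supp}(\mathbf{u})|\geq 4$, and for $|\mathrm{supp}(\mathbf{u})|=3$ you need the slightly sharper containment $\mathrm{supp}(\mathbf{w})\setminus\{k\}\supseteq\mathrm{supp}(\mathbf{u})\setminus\{k\}$ rather than just a cardinality bound on $\mathrm{supp}(\mathbf{w})$; this is implicit in your setup ($\mathbf{w}=\mathbf{u}x_\ell/x_k$) and easy to supply, so it is a matter of writing it out, not a gap.
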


\begin{proof} Set $J=I\setminus_{\mathbf{u}}$. Suppose that $|\mbox{supp}(\mathbf{u})|= 2$. We may assume that $\mbox{supp}(\mathbf{u})=\{1,2\}$. Let $A$  denote the subset of $G(J)$ consisting of monomials  whose supports are  included in $\{1,2\}$. By Lemma~\ref{n=2}, there is a monomial $\mathbf{v} \in A$  such that the colon ideal $(A)\setminus_{\mathbf{v}}:\mathbf{v}$ has a minimal monomial generator, say $\overline{\mathbf{v}}$,  of degree more than 1. Note that $\mbox{supp}(\overline{\mathbf{v}})\subseteq \{1,2\}$. We claim that $\overline{\mathbf{v}}$ is also a minimal monomial generator of $J\setminus_{\mathbf{v}}: \mathbf{v}$. In fact, for any $\mathbf{w}\in G(J)\setminus A$, we have $\frac{[\mathbf{w},\mathbf{v}]}{\mathbf{v}}$ can not divide $\overline{\mathbf{v}}$ since $\mathrm{supp}(\frac{[\mathbf{w},\mathbf{v}]}{\mathbf{v}})\cap \{3,\ldots,n\}\neq \emptyset$. This implies $\overline{\mathbf{v}}\in G(J\setminus_{\mathbf{v}}: \mathbf{v})$, as claimed. From this it follows that $J$ is not quasi-linear .

If $|\mbox{supp}(\mathbf{u})|= 1$, then $I\setminus_{\mathbf{u}}$ has a linear resolution, see e.g. Example~\ref{one}. In particular, it is quasi-linear. Suppose now that $|\mbox{supp}(\mathbf{u})|\geq 3$.  Let $\mathbf{v}\in G(J)=G(I\setminus_{\mathbf{u}})$. If $|\mbox{supp}(\mathbf{v})|=1$, we may assume $\mathbf{v}=x_1^d$. We claim that $J\setminus_{\mathbf{v}}:\mathbf{v}=(x_2,\ldots,x_n)$.  In fact, for any $2\leq i\leq n$, we have $\frac{x_i\mathbf{v}}{x_1}=x_ix_1^{d-1}\notin \{\mathbf{u}, \mathbf{v}\}$ since $|\supp(\frac{x_i\mathbf{v}}{x_1})|=2$. This implies $x_i\mathbf{v}\in J\setminus_{\mathbf{v}}$ and so $x_i\in J\setminus_{\mathbf{v}}:\mathbf{v}$. It is clear that $x_1\notin J\setminus_{\mathbf{v}}:\mathbf{v}$ and so we prove the claim. If $|\mbox{supp}(\mathbf{v})|\geq 3$, we show that $J\setminus_{\mathbf{v}}:\mathbf{v}=\mathfrak{m}$. First, we may assume $\{1,2,3\}\subseteq \mbox{supp}(\mathbf{v})$. Next, for any $i\in [n]$, note that
there is at least one of monomials in  $\{\frac{x_i\mathbf{v}}{x_1}, \frac{x_i\mathbf{v}}{x_2},\frac{x_i\mathbf{v}}{x_3}\}$ does not belong to $\{\mathbf{u},\mathbf{v}\}$. This implies $x_i\mathbf{v}\in J\setminus_{\mathbf{v}}$ and so $x_i\in J\setminus_{\mathbf{v}}:\mathbf{v}$. Thus, we have $J\setminus_{\mathbf{v}}:\mathbf{v}=\mathfrak{m}$ indeed. From those it follows that $J$ is quasi-linear .
\end{proof}

 \section{Linearity and Strong Linearity}

 Let $I$ and $J$ be  monomial ideals. Recall that the notion that  $I$ is linear over $J$ has been defined in Definition~\ref{linear}.  In this section we first prove if $I$ is linear over $J$,  then  $I$  has a linear resolution whenever $J$ has a linear resolution, but not vice versa. To make up for this defect, we introduce the notion of strong linearity and  prove  if $I$   is strongly linear over $J$, then $I$ is quasi-linear if and only if $J$ is quasi-linear, and  $I$ has a linear resolution if and only if  $J$ has a linear resolution.

  \begin{Proposition} \label{simple} Let $J\subset I$ be  monomial ideals such that $I$ is linear over $J$.
  \begin{enumerate}
   \item If $J$ is componentwise linear, then so is $I$;
    \item If $J$ has a linear resolution and $I$ is generated in a single degree, then  $I$ has a linear resolution.

  \end{enumerate}
    \end{Proposition}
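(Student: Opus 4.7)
The plan is to reduce both parts to the base case $r=1$ via induction on $r$. For the inductive step, set $I' = (J, \mathbf{u}_1, \ldots, \mathbf{u}_{r-1})$; this is linear over $J$ with $r-1$ additions, so by the inductive hypothesis $I'$ inherits the relevant property (componentwise linearity in (1), $d$-linear resolution in (2)), and then $I = (I', \mathbf{u}_r)$ with $I' : \mathbf{u}_r$ generated by variables. It therefore suffices to treat $r=1$: write $I = (J, \mathbf{u})$ with $J : \mathbf{u} = (x_{i_1}, \ldots, x_{i_s})$ and put $d = \deg(\mathbf{u})$.

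The common engine for both parts is the short exact sequence
\begin{equation*}
0 \longrightarrow J \longrightarrow I \longrightarrow R/(J:\mathbf{u})[-d] \longrightarrow 0,
\end{equation*}
already used in the proof of Theorem~\ref{main}. The rightmost term equals $R/(x_{i_1}, \ldots, x_{i_s})[-d]$, whose minimal free resolution is the Koszul complex on those variables shifted by $d$, and in particular is $d$-linear.

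For part (2), the single-degree hypothesis on $I$ forces $\deg(\mathbf{u}) = d$ and, since $G(J) \subset G(I)$, forces $J$ itself to be generated in degree $d$. The Tor long exact sequence then yields
\begin{equation*}
\beta_{i,i+j}(I) \leq \beta_{i,i+j}(J) + \beta_{i,i+j}(R/(J:\mathbf{u})[-d])
\end{equation*}
for all $i, j$; both summands on the right vanish whenever $j \neq d$, and because $I$ is generated in degree $d$ one concludes $\beta_{i,i+j}(I) = 0$ for $j \neq d$, giving the $d$-linear resolution.

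For part (1), the task is to show $I_{\langle e \rangle}$ has an $e$-linear resolution for every $e$. The case $e < d$ is immediate, since $I_{\langle e \rangle} = J_{\langle e \rangle}$. For $e \geq d$ one has $I_{\langle e \rangle} = J_{\langle e \rangle} + \mathbf{u}\mathfrak{m}^{e-d}$, and the plan is to exhibit $I_{\langle e \rangle}$ as linear over $J_{\langle e \rangle}$ in the sense of Definition~\ref{linear}, by listing the monomials $\mathbf{u}\mathbf{v}$ with $\mathbf{v}\in G(\mathfrak{m}^{e-d})$ in an order compatible with the linear quotients of $\mathfrak{m}^{e-d}$. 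Once this is verified, part (2) applies directly to $I_{\langle e \rangle}$ and $J_{\langle e \rangle}$, both of which are generated in degree $e$, producing the desired $e$-linear resolution.

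The hard part is checking, at each stage of this ordering, that the colon ideal
$(J_{\langle e \rangle}, \mathbf{u}\mathbf{v}_1, \ldots, \mathbf{u}\mathbf{v}_{k-1}) : \mathbf{u}\mathbf{v}_k$
is generated by variables. The piece $(\mathbf{u}\mathbf{v}_1, \ldots, \mathbf{u}\mathbf{v}_{k-1}):\mathbf{u}\mathbf{v}_k$ is variable-generated by the choice of ordering of $G(\mathfrak{m}^{e-d})$. The delicate piece is $J_{\langle e \rangle} : \mathbf{u}\mathbf{v}_k$: here one exploits the factorization
\begin{equation*}
J : \mathbf{u}\mathbf{v}_k \;=\; (J:\mathbf{u}) : \mathbf{v}_k \;=\; (x_{i_j} \colon\; x_{i_j} \nmid \mathbf{v}_k),
\end{equation*}
and combines it with the observation that $J_{\langle e \rangle}$ contains every element of $J$ that admits a divisor in $G(J)$ of degree at most $e$. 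This last absorption property, extracted from the componentwise linearity of $J$, is what pins down the equality $J_{\langle e \rangle} : \mathbf{u}\mathbf{v}_k = J:\mathbf{u}\mathbf{v}_k$ and completes the reduction to part (2).
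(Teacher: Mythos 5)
Your argument is correct in outline but takes a genuinely different route from the paper. The paper disposes of both parts in three lines by quoting a statement from the proof of \cite[Theorem 8.2.5]{HH} (adding one generator whose colon ideal is generated by linear forms preserves componentwise linearity) and then deducing (2) from (1). You invert this: you prove (2) directly from the short exact sequence $0\to J\to I\to R/(J:\mathbf{u})[-d]\to 0$ --- exactly the mechanism the paper uses in Proposition~\ref{squarefree} and alludes to in the proof of Corollary~\ref{minimal}, and this part is airtight --- and then bootstrap (1) from (2) by exhibiting each component $I_{\langle e\rangle}=J_{\langle e\rangle}+\mathbf{u}\mathfrak{m}^{e-d}$ as linear over $J_{\langle e\rangle}$. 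This buys a self-contained proof in place of a citation, at the cost of having to verify the linear-quotient condition componentwise.

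The one step you yourself flag as delicate is where the justification does not hold up as written. You derive $J_{\langle e\rangle}:\mathbf{u}\mathbf{v}_k=J:\mathbf{u}\mathbf{v}_k$ from an ``absorption property extracted from the componentwise linearity of $J$.'' That property (a monomial of $J$ divisible by a generator of degree $\le e$ lies in $J_{\langle e\rangle}$) holds for every monomial ideal and owes nothing to componentwise linearity; more importantly, by itself it does not yield the inclusion you actually need, namely $(x_{i_1},\dots,x_{i_s})\subseteq J_{\langle e\rangle}:\mathbf{u}\mathbf{v}_k$. What must be checked is that each $x_{i_j}\mathbf{u}$ is divisible by some $\mathbf{w}\in G(J)$ of degree at most $d$, not merely $d+1$, and this is precisely where the hypothesis $G(I)=G(J)\cup\{\mathbf{u}\}$ enters: $x_{i_j}\in J:\mathbf{u}$ produces $\mathbf{w}\in G(J)$ with $\mathbf{w}\mid x_{i_j}\mathbf{u}$, so $\deg\mathbf{w}\le d+1$, and equality would force $\mathbf{w}=x_{i_j}\mathbf{u}$, i.e.\ $\mathbf{u}\mid\mathbf{w}$, contradicting minimality of $G(I)$. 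Without that input the equality genuinely fails: for $J=(x_1x_2,x_1x_3,x_3x_4x_5)$ and $\mathbf{u}=x_3x_4$ one has $J:\mathbf{u}=(x_1,x_5)$ but $J_{\langle 2\rangle}:\mathbf{u}=(x_1)$ --- here $\mathbf{u}$ divides a generator of $J$, which is exactly what the hypothesis forbids. Two smaller repairs: the formula $(J:\mathbf{u}):\mathbf{v}_k=(x_{i_j}\:\; x_{i_j}\nmid\mathbf{v}_k)$ is wrong when some $x_{i_j}$ divides $\mathbf{v}_k$ (the colon is then the unit ideal); in that case $\mathbf{u}\mathbf{v}_k$ already lies in $J_{\langle e\rangle}$ and must be dropped from the list of added generators, as must any $\mathbf{u}\mathbf{v}_k$ failing to be a minimal generator of $I_{\langle e\rangle}$. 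With these insertions your argument is complete.
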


    \begin{proof} It was shown in the proof of \cite[Theorem 8.2.5]{HH} that if $f_1,\ldots, f_m$ is a minimal homogeneous system of generators of a graded ideal such that $(f_1,\ldots,f_{m-1})$ is componentwise linear and $(f_1,\ldots,f_{m-1}):f_m$ is generated by linear forms, then this ideal is componentwise linear.
    The statement (1) follows directly from this fact.

    Since a componentwise linear ideal generated in a single degree has a linear resolution, the statement (2) is a direct sequence of (1).
    \end{proof}
    Proposition~\ref{simple} has the following application in the area of simplicial complexes.
  \begin{Proposition} Suppose that $\Delta$ is shelled over $\Gamma$.
\begin{enumerate}
     \item If $\Gamma$ is a Cohen-Macaulay complex over $\mathbb{K}$, then so is $\Delta$;
      \item If $\Gamma$ is a sequentially Cohen-Macaulay complex over $\mathbb{K}$, then so is $\Delta$.  \end{enumerate}
 \end{Proposition}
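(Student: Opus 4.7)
My plan is to convert the two statements into statements about Stanley--Reisner ideals of the Alexander duals: by \cite{EG}, $\Gamma$ is Cohen--Macaulay over $\mathbb{K}$ if and only if $I_{\Gamma^{\vee}}$ has a linear resolution, and by \cite{HRW99}, $\Gamma$ is sequentially Cohen--Macaulay over $\mathbb{K}$ if and only if $I_{\Gamma^{\vee}}$ is componentwise linear; the analogous statements hold for $\Delta$. Once this translation is in place, both parts reduce to showing that the ideal $I_{\Delta^{\vee}}$ inherits ``linear resolution'' or ``componentwise linear'' from $I_{\Gamma^{\vee}}$.

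I would proceed by induction on the number of shelling moves needed to take $\Gamma$ to $\Delta$, reducing everything to the single-move case: write $\Delta=\langle F_1,\ldots,F_{k-1},F_k\rangle$ and $\Gamma=\Delta_{F_k}=\langle F_1,\ldots,F_{k-1}\rangle$. Using the formula $I_{\Sigma^{\vee}}=(\mathbf{x}_{\overline{G_1}},\ldots,\mathbf{x}_{\overline{G_t}})$ for $\Sigma=\langle G_1,\ldots,G_t\rangle$ recorded just before Lemma~\ref{F}, we obtain $I_{\Delta^{\vee}}=I_{\Gamma^{\vee}}+(\mathbf{x}_{\overline{F_k}})$. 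The assumption that $\Gamma\to \Delta$ is a shelling move is translated through Lemma~\ref{F} into the statement that $I_{\Gamma^{\vee}}\colon \mathbf{x}_{\overline{F_k}}$ is generated by variables, which is precisely the condition in Definition~\ref{linear} for $I_{\Delta^{\vee}}$ to be linear over $I_{\Gamma^{\vee}}$.

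At this point statement (2) follows directly from Proposition~\ref{simple}(1): componentwise linearity passes from $I_{\Gamma^{\vee}}$ to $I_{\Delta^{\vee}}$, and dualizing back via \cite{HRW99} gives that $\Delta$ is sequentially Cohen--Macaulay. For statement (1), I would invoke Proposition~\ref{simple}(2), which additionally requires $I_{\Delta^{\vee}}$ to be generated in a single degree. Cohen--Macaulayness of $\Gamma$ forces $\Gamma$ to be pure of some dimension $d-1$, so $I_{\Gamma^{\vee}}$ is generated in degree $n-d$; one then checks $|F_k|=d$ (so that $I_{\Delta^{\vee}}$ is also generated in degree $n-d$) from the shelling-move description combined with the fact that $F_k$ is a facet of $\Delta$ and $|F_j|=d$ for $j<k$, forcing some $F_j$ with $|F_j\cap F_k|=|F_k|-1\le d-1$, i.e., $|F_k|\le d$, with equality needed for $\Delta$ to even be pure. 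Proposition~\ref{simple}(2) then supplies a linear resolution for $I_{\Delta^{\vee}}$, so $\Delta$ is Cohen--Macaulay by \cite{EG}.

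The bulk of the argument is carried by Alexander duality together with Lemma~\ref{F}, which translates the simplicial shelling-move hypothesis into the algebraic ``linear over'' hypothesis; Proposition~\ref{simple} then does the rest. The only subtlety I expect is the equi-dimensionality bookkeeping in (1); aside from that the proof is essentially a dictionary translation.
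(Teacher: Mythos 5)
Your proof is correct and takes essentially the same route as the paper, whose entire argument for this proposition reads ``In view of Lemma~\ref{F}, this is the Alexander dual of Proposition~\ref{simple}'' --- i.e.\ precisely the dictionary you spell out: \cite{EG} and \cite{HRW99} translate (sequential) Cohen--Macaulayness of $\Gamma$ and $\Delta$ into linear resolution/componentwise linearity of $I_{\Gamma^{\vee}}$ and $I_{\Delta^{\vee}}$, Lemma~\ref{F} turns each shelling move into the ``linear over'' condition of Definition~\ref{linear}, and Proposition~\ref{simple} finishes. The purity bookkeeping you raise for part (1) (needed so that $I_{\Delta^{\vee}}$ is generated in a single degree before invoking Proposition~\ref{simple}(2)) is the one point the paper leaves entirely implicit; your check that $|F_k|=d$ is the right thing to worry about and matches the paper's tacit assumption.
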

\begin{proof}  In view of Lemma~\ref{F}, this is  the Alexander dual of Proposition~\ref{simple}.
\end{proof}

The converse of Proposition~\ref{simple}.(2) is not true.  In other words, when  $I$ is linear over $J$, we cannot deduce that $J$ has a linear resolution from the condition that $I$ has a linear resolution. We now introduce a new notion, which is stronger  than ``{\it linear over}", and it will be the main topic of this section.

 \begin{Lemma} \label{1} Let $I$ be a monomial ideal generated in  degree $d$, and $\mathbf{u}$ a monomial of degree $d-1$. Then the following are equivalent:
 \begin{enumerate}
   \item $I:\mathbf{u}$ is generated by variables;
   \item  There are a subset $A$ of  $[n]$ such that $\mathbf{u}x_i\in G(I)$ for each $i\in A$ and $\mathrm{supp}(\frac{[\mathbf{v},\mathbf{u}]}{\mathbf {u}})\cap A\neq \emptyset$ for each $\mathbf{v}\in G(I)$;
       \item  There are  a subset $A$ of  $[n]$ such that $\mathbf{u}x_i\in G(I)$ for all $i\in A$ and $[\mathbf{u},\mathbf{v}]\in \mathbf{u}(x_i\:\; i\in A)$ for each $\mathbf{v}\in G(I)$.
 \end{enumerate}
 \end{Lemma}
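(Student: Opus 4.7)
The plan is to establish the cycle of implications $(1)\Rightarrow(2)\Rightarrow(3)\Rightarrow(1)$, relying on the standard formula
\[
I:\mathbf{u}=\Bigl(\frac{[\mathbf{v},\mathbf{u}]}{\mathbf{u}}\ \Bigm|\ \mathbf{v}\in G(I)\Bigr)
\]
recalled in the preliminaries, together with the degree hypothesis that $I$ is generated in degree $d$ and $\deg(\mathbf{u})=d-1$.

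For $(1)\Rightarrow(2)$, I would define $A=\{i\in[n] : x_i\in I:\mathbf{u}\}$. The crucial observation is that $x_i\in I:\mathbf{u}$ means $\mathbf{u}x_i\in I$; since $\deg(\mathbf{u}x_i)=d$ and $I$ has no generators in degree less than $d$, the monomial $\mathbf{u}x_i$ must itself be a minimal generator, giving $\mathbf{u}x_i\in G(I)$. By hypothesis every minimal monomial generator of $I:\mathbf{u}$ is linear, so for each $\mathbf{v}\in G(I)$ the generator $[\mathbf{v},\mathbf{u}]/\mathbf{u}$ is divisible by some $x_i\in I:\mathbf{u}$; this $i$ lies both in $A$ and in the support of $[\mathbf{v},\mathbf{u}]/\mathbf{u}$, yielding (2).

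The implication $(2)\Leftrightarrow(3)$ is a straightforward reformulation: the statement $i\in \mathrm{supp}([\mathbf{v},\mathbf{u}]/\mathbf{u})$ is the same as $x_i\mid [\mathbf{v},\mathbf{u}]/\mathbf{u}$, which, after multiplying by $\mathbf{u}$, is the same as $\mathbf{u}x_i\mid [\mathbf{u},\mathbf{v}]$, i.e.\ $[\mathbf{u},\mathbf{v}]\in\mathbf{u}(x_i : i\in A)$. So I would simply record this as a one-line verification.

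Finally, for $(3)\Rightarrow(1)$, pick the subset $A$ furnished by (3). The condition $[\mathbf{u},\mathbf{v}]\in\mathbf{u}(x_i : i\in A)$ for every $\mathbf{v}\in G(I)$ says each generator $[\mathbf{v},\mathbf{u}]/\mathbf{u}$ of $I:\mathbf{u}$ lies in the monomial ideal $(x_i : i\in A)$, so $I:\mathbf{u}\subseteq(x_i : i\in A)$. The reverse inclusion is immediate: for $i\in A$ we have $\mathbf{u}x_i\in G(I)\subseteq I$, hence $x_i\in I:\mathbf{u}$. Therefore $I:\mathbf{u}=(x_i : i\in A)$, which is generated by variables. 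There is no real obstacle here — the only delicate point, and the one I would be careful to state explicitly, is the degree argument in $(1)\Rightarrow(2)$ that upgrades ``$\mathbf{u}x_i\in I$'' to ``$\mathbf{u}x_i\in G(I)$''.
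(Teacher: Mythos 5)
Your proof is correct; the paper itself omits the argument entirely (``The proof is easy and so we omit it''), and what you supply is exactly the standard verification one would expect, based on the formula $I:\mathbf{u}=\bigl(\tfrac{[\mathbf{v},\mathbf{u}]}{\mathbf{u}}\;:\;\mathbf{v}\in G(I)\bigr)$. The one point genuinely worth spelling out --- that $\mathbf{u}x_i\in I$ together with $\deg(\mathbf{u}x_i)=d$ forces $\mathbf{u}x_i\in G(I)$ because $I$ is generated in degree $d$ --- is precisely the point you isolate, so nothing is missing.
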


 \begin{proof} The proof is easy and so we omit it.
 \end{proof}

 \begin{Definition} \em  Let $I$ be  a monomial ideal generated in degree $d$, and $\mathbf{u}$ a monomial of degree $(d-1)$.  We say that  the monomial $\mathbf{u}$ is {\it   strongly linear} over $I$ if one of the equivalence conditions in Lemma~\ref{1} is satisfied.
 \end{Definition}

 \begin{Proposition} \label{useful} Let $I$ be  a monomial ideal generated in  degree $d$, and $\mathbf{u}$ a monomial strongly linear over $I$. Then the following statements hold true. \begin{enumerate}

    \item  For any $i\in [n]$ with $x_i\mathbf{u}\notin I$, $x_i\mathbf{u}$ is linear over $I$;

                  \item If  $I$ is a squarefree monomial ideal, then $\mathbf{u}$ is squarefree;

\item   $\mathbf{u}$ is also strongly linear over $I+x_i\mathbf{u}$ for any $i\in [n]$.
                \end{enumerate}
 \end{Proposition}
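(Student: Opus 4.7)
The plan is to work with characterization (2) of Lemma~\ref{1}: fix a subset $A\subseteq[n]$ witnessing that $\mathbf{u}$ is strongly linear over $I$, so that $\mathbf{u}x_j\in G(I)$ for every $j\in A$, and $\operatorname{supp}([\mathbf{v},\mathbf{u}]/\mathbf{u})\cap A\neq\emptyset$ for every $\mathbf{v}\in G(I)$. Equivalently, $I:\mathbf{u}=(x_j : j\in A)$. All three parts follow by bookkeeping on $A$.

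For (1), the key identity is $I:(x_i\mathbf{u})=(I:\mathbf{u}):x_i$. Since $x_i\mathbf{u}\notin I$, we have $i\notin A$ (otherwise $x_i\in I:\mathbf{u}$, giving $x_i\mathbf{u}\in I$). Then $(x_j:j\in A):x_i=(x_j:j\in A)$ because $x_i$ is coprime to each $x_j$ with $j\in A$. Thus $I:(x_i\mathbf{u})$ is generated by variables, so by Definition~\ref{linear} the ideal $I+x_i\mathbf{u}$ is linear over $I$.

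For (2), if $I$ is squarefree and nonzero, then for any $\mathbf{v}\in G(I)$ the condition $\operatorname{supp}([\mathbf{v},\mathbf{u}]/\mathbf{u})\cap A\neq\emptyset$ forces $A$ to be nonempty. Picking any $j\in A$, the monomial $\mathbf{u}x_j\in G(I)$ is squarefree, so $\mathbf{u}$ itself must be squarefree.

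For (3), I may assume $x_i\mathbf{u}\notin I$, else $I+x_i\mathbf{u}=I$ and the claim is trivial. Since $\deg(x_i\mathbf{u})=d$ and no element of $G(I)$ strictly divides $x_i\mathbf{u}$ (otherwise $x_i\mathbf{u}\in I$), we have $G(I+x_i\mathbf{u})=G(I)\cup\{x_i\mathbf{u}\}$. I claim $A':=A\cup\{i\}$ witnesses strong linearity of $\mathbf{u}$ over $I+x_i\mathbf{u}$. Indeed, $\mathbf{u}x_j\in G(I+x_i\mathbf{u})$ for each $j\in A'$ (by hypothesis on $A$, plus the new generator $x_i\mathbf{u}$), and for any $\mathbf{v}\in G(I+x_i\mathbf{u})$ either $\mathbf{v}\in G(I)$, in which case $\operatorname{supp}([\mathbf{v},\mathbf{u}]/\mathbf{u})\cap A\neq\emptyset\subseteq A'$, or $\mathbf{v}=x_i\mathbf{u}$, in which case $[\mathbf{v},\mathbf{u}]/\mathbf{u}=x_i$ and $i\in A'$. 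None of the steps appears to present a serious obstacle; the whole argument is a direct translation of the definition, with the small bit of care being to confirm (in part (1)) that $i\notin A$ and (in part (3)) that the minimal generating set is simply $G(I)\cup\{x_i\mathbf{u}\}$.
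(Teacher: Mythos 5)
Your proof is correct. Parts (1) and (3) take essentially the paper's route: for (1) the paper likewise fixes $A$ with $I:\mathbf{u}=(x_j:\ j\in A)$, observes $i\notin A$, and verifies $I:\mathbf{u}x_i=(x_j:\ j\in A)$ by examining $[\mathbf{v},\mathbf{u}x_i]/(\mathbf{u}x_i)$ for each $\mathbf{v}\in G(I)$ — your identity $I:(x_i\mathbf{u})=(I:\mathbf{u}):x_i$ packages the same computation a little more cleanly — and for (3) the paper writes only ``Straightforward,'' so your explicit check that $A'=A\cup\{i\}$ witnesses condition (2) of Lemma~\ref{1} (together with the remark that $G(I+x_i\mathbf{u})=G(I)\cup\{x_i\mathbf{u}\}$) supplies what the author omitted. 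Part (2) is where you genuinely diverge: the paper argues by contradiction, replacing $\mathbf{u}$ by its squarefree part $\mathbf{v}=\prod_{i\in\supp(\mathbf{u})}x_i$, which has degree at most $d-2$, and using $I:\mathbf{u}=I:\mathbf{v}$ (valid since $I$ is squarefree) to conclude that $I:\mathbf{u}$ would be generated in degree at least $2$. Your argument instead notes that $I\neq 0$ forces $I:\mathbf{u}$ to contain some variable $x_j$, so $\mathbf{u}x_j$ is a degree-$d$ element of $I$ and hence a squarefree minimal generator, which immediately makes $\mathbf{u}$ squarefree. Both are valid; yours is shorter and more transparent, while the paper's version has the minor advantage of isolating the general fact that coloning a squarefree ideal by a non-squarefree monomial of the right degree can never produce an ideal generated by variables.
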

 \begin{proof} (1) We may assume that $I:\mathbf{u}=(x_j\:\; j\in A)$, where $A$ is a subset of $[n]$.  Note that $i\notin A$ since $x_i\mathbf{u}\notin I$. We claim that $I:\mathbf{u}x_i=(x_j\:\; j\in A)$.  Since $I:\mathbf{u}\subseteq I:\mathbf{u}x_i$, one has $(x_j\:\; j\in A)\subseteq I:\mathbf{u}x_i$. Conversely, for any $\mathbf{v}\in G(I)$, note that $\frac{[\mathbf{v}, \mathbf{u}x_i]}{\mathbf{u}x_i}$ is either
$\frac{[\mathbf{v}, \mathbf{u}]}{\mathbf{u}}$ or $(\frac{[\mathbf{v}, \mathbf{u}]}{\mathbf{u}})/x_i$, it follows that $\frac{[\mathbf{v}, \mathbf{u}x_i]}{\mathbf{u}x_i}\in (x_j\:\; j\in A)$ by Lemma~\ref{1}.(2) together with the known fact $i\notin A$. Hence $ I:\mathbf{u}x_i=(x_j\:\; j\in A)$, as claimed.

 (2) Assume on the contrary that $\mathbf{u}$ is not squarefree. Then $\mathbf{v}:=\prod_{i\in \mathrm{supp}(\mathbf{u})}x_i$ is  a squarefree monomial of degree  $\leq (d-2)$. Since $I$ is squarefree, we have $I:\mathbf{u}=I:\mathbf{v}$ and so  $I:\mathbf{u}$ is generated by monomials in degree at least 2, a contradiction.

(3) Straightforward.
 \end{proof}

\begin{Definition}\em  Let $J\subseteq I$ be monomial ideals generated in degree $d$. We say $I$ is {\it 1-step  strongly linear over} $J$ if there is a monomial  $\mathbf{u}$ which is strongly linear over $J$ such that $I=J+(\mathbf{u}F)$ for some subset $F$  of $\{x_1,\ldots,x_n\}$. For a positive integer $s$, that $I$ is {\it $s$-step strongly  linear over $J$} is defined by recursion. We say  $I$ is {\it strongly linear over} $J$  if $I$ is $s$-step strongly linear over $J$ for some $s>0$.
 \end{Definition}

\begin{Example}\em  Let $G$ be a simple graph on $[n]$ and fix $i\in [n]$. \begin{enumerate}
                                                                              \item The variable $x_i$ is strongly linear over $I(G)$ if and only if $N_G(i)$ is a cover of $G$, that is, $N_G(i)\cap e\neq \emptyset$ for any $e\in E(G)$. Here, $N_G(i)$ is the open neighborhood $\{j\in [n]\:\; \{i,j\}\in E(G)\}$ of $i$ in $G$.
                                                                              \item Let $x_i$ be strongly linear over $I(G)$ and  $j\in [n]\setminus N_G[i]$. Then $I(G)$ is quasi-linear if and only if $I(G)+(x_ix_j)$ is quasi-linear. Here, $N_G[i]$ is the closed neighborhood $N_G(i)\cup \{i\}$ of $i$ in $G$.
                                                                            \end{enumerate}

\end{Example}
\begin{proof} (1) This follows  from the definitions.

(2) Let $G'$ be the graph associated to the ideal $I+(x_ix_j)$. Then $G$ is a subgraph of $G'$. If $I(G)$ is not quasi-linear, there exist edges $e_1,e_2$, which form an induced matching of $G$ by Proposition~\ref{quadratic}. Note that $i\notin e_1\cup e_2$  by (1) and  $\{e_1,e_2\}$ is yet an induced matching of $G'$, it follows that $I+(x_ix_j)$ is not quasi-linear.

Conversely, suppose that  $I+(x_ix_j)$ is not quasi-linear.  We may assume $\{e_1,e_2\}$ is an induced matching of $G'$, see Proposition~\ref{quadratic}. Since  $i\notin e_1\cup e_2$,  we have   $\{e_1,e_2\}\subseteq E(G)$, and so it is also an induced matching of $G$. From this it follows that $I(G)$ is not quasi-linear. \end{proof}

More generally, we have the following proposition.

\begin{Proposition} \label{quasikeep} Let $I$ be a monomial ideal generated in degree $d$ and $\mathbf{u}$ a   monomial strongly linear over $I$. Let $k\in [n]$.  Then the following statements are equivalent:
\begin{enumerate}
  \item $I$ is not quasi-linear;
  \item  $I+(\mathbf{u}x_k)$ is not quasi-linear.
\end{enumerate}

\end{Proposition}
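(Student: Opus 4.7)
The plan is to analyze $L'_\mathbf{v} := J\setminus_\mathbf{v}:\mathbf{v}$ in relation to $L_\mathbf{v} := I\setminus_\mathbf{v}:\mathbf{v}$, guided by a dichotomy coming from the strong linearity of $\mathbf{u}$. First reduce to $\mathbf{u}x_k \notin I$ (otherwise $J=I$, trivial). Writing $I:\mathbf{u}=(x_j:j\in A)$, one has $k\notin A$, $G(J)=G(I)\cup\{\mathbf{u}x_k\}$, and the new generator $\mathbf{u}x_k$ is harmless since $J\setminus_{\mathbf{u}x_k}:\mathbf{u}x_k = I:\mathbf{u}x_k = (x_j:j\in A)$ is already variable-generated. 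For $\mathbf{v}\in G(I)$ we have $L'_\mathbf{v}=L_\mathbf{v}+(\alpha_\mathbf{v})$ with $\alpha_\mathbf{v}:=\mathbf{u}x_k/\gcd(\mathbf{u}x_k,\mathbf{v})$. The dichotomy is: \textbf{(X)} there exists $j\in A$ with $v_j>u_j$ and $\mathbf{u}x_j\neq\mathbf{v}$, in which case $(\mathbf{u}x_j):\mathbf{v}=\mathbf{u}/\gcd(\mathbf{u},\mathbf{v})$ lies in $L_\mathbf{v}$ and (by a componentwise check) divides $\alpha_\mathbf{v}$, so $\alpha_\mathbf{v}\in L_\mathbf{v}$ and $L'_\mathbf{v}=L_\mathbf{v}$; or \textbf{(Y)} $\mathbf{v}=\mathbf{u}x_j$ for the unique $j\in A$ with $v_j>u_j$, and then $\alpha_\mathbf{v}=x_k$, so $L'_\mathbf{v}=L_\mathbf{v}+(x_k)$.

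The direction $(2)\Rightarrow(1)$ is straightforward from this dichotomy: any degree-$\geq 2$ minimal generator of $L'_\mathbf{v}$ must live in $G(L_\mathbf{v})$, either by equality of the ideals (Case (X)) or because the only new minimal generator produced by $+(x_k)$ is the variable $x_k$ itself (Case (Y)). For the reverse direction $(1)\Rightarrow(2)$, let $\mathbf{v}\in G(I)$ and $\mathbf{w}\in G(L_\mathbf{v})$ with $\deg\mathbf{w}\geq 2$. Both Case (X) and Case (Y) with $x_k\nmid\mathbf{w}$ keep $\mathbf{w}$ minimal in $L'_\mathbf{v}$. The delicate sub-case is Case (Y) with $x_k\mid\mathbf{w}$, where we must change the witness. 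Pick $\mathbf{z}\in G(I)\setminus\{\mathbf{v}\}$ realizing $\mathbf{w}=\mathbf{z}/\gcd(\mathbf{z},\mathbf{v})$. Since $x_{j'}=(\mathbf{u}x_{j'}):\mathbf{v}\in L_\mathbf{v}$ for every $j'\in A\setminus\{j\}$, minimality of $\mathbf{w}$ forces $z_{j'}\leq u_{j'}$ for such $j'$; then strong linearity of $\mathbf{u}$ applied to $\mathbf{z}$ gives $z_j>u_j$, while $x_k\mid\mathbf{w}$ gives $z_k>u_k$. From $z_j\geq 1$ we get $\mathbf{v}=\mathbf{u}x_j\mid\mathbf{u}\mathbf{z}$, so $\mathbf{u}\in L_\mathbf{z}$; and $\mathbf{v}\mid\mathrm{lcm}(\mathbf{u},\mathbf{z})$ places $\alpha_\mathbf{z}=\mathbf{u}/\gcd(\mathbf{u},\mathbf{z})$ in $L_\mathbf{z}$, so $L'_\mathbf{z}=L_\mathbf{z}$ (i.e., $\mathbf{z}$ itself falls into Case (X) for $J$).

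It remains to produce a minimal generator of $L_\mathbf{z}$ of degree $\geq 2$. The candidate is $\mathbf{w}'':=\mathbf{v}/\gcd(\mathbf{z},\mathbf{v})=\mathbf{u}/\gcd(\mathbf{u},\mathbf{z})\in L_\mathbf{z}$, of degree $\deg\mathbf{w}\geq 2$. Select $\mathbf{z}$ extremally within the finite set $S_\mathbf{w}:=\{\mathbf{z}^*\in G(I)\setminus\{\mathbf{v}\}:\mathbf{z}^*/\gcd(\mathbf{z}^*,\mathbf{v})=\mathbf{w}\}$ by maximizing $\sum_{i\in\supp(\mathbf{u})}z_i^*$. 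If some variable $x_\ell\in L_\mathbf{z}$ divided $\mathbf{w}''$, one would obtain $\mathbf{z}^{**}=x_\ell\mathbf{z}/x_p\in G(I)$ for some $p\in\supp(\mathbf{z})$, $p\neq\ell$, and a coordinate-wise comparison shows that $\mathbf{z}^{**}/\gcd(\mathbf{z}^{**},\mathbf{v})$ divides $\mathbf{w}$: properly (contradicting $\mathbf{w}\in G(L_\mathbf{v})$) or with equality, in which case $\mathbf{z}^{**}\in S_\mathbf{w}$ with extremal measure at least as large, and a secondary refinement within the maximum stratum prevents cycling. The key finiteness ingredient is that $\sum_{i\in\supp(\mathbf{u})}z_i^*\leq\deg\mathbf{u}=d-1$, and its saturation would force $\mathbf{z}^*=\mathbf{u}x_{j^*}$ for some $j^*\in A$, yielding $\mathbf{w}=x_{j^*}$ of degree one --- contradicting $\deg\mathbf{w}\geq 2$. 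Hence $\mathbf{w}''$ is a minimal generator of $L_\mathbf{z}=L'_\mathbf{z}$ of degree $\geq 2$, witnessing that $J$ is not quasi-linear.

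The main obstacle is the final sub-case (Case (Y) with $x_k\mid\mathbf{w}$): one must track the coordinate-wise swap $\mathbf{z}\mapsto x_\ell\mathbf{z}/x_p$, verify via strong linearity that the extremal $\mathbf{z}$ rules out the swap and hence forces minimality of $\mathbf{w}''$, and use the degree hypothesis on $\mathbf{w}$ to exclude the saturating configuration $\mathbf{z}=\mathbf{u}x_{j^*}$. Everything else is bookkeeping around the dichotomy.
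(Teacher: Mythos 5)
Your dichotomy between Cases (X) and (Y), your treatment of the new generator $\mathbf{u}x_k$, and the direction $(2)\Rightarrow(1)$ are all correct, and you have put your finger on exactly the point that the paper's own proof passes over with the words ``we may assume'': the situation where the only witness of non-quasi-linearity is the colon by a generator $\mathbf{v}=\mathbf{u}x_j$ and the offending minimal generator $\mathbf{w}$ of $I\setminus_{\mathbf{v}}:\mathbf{v}$ is divisible by $x_k$. The gap is the sentence ``a secondary refinement within the maximum stratum prevents cycling.'' No refinement is exhibited, and none exists: the swap $\mathbf{z}\mapsto x_\ell\mathbf{z}/x_p$ with $\ell,p\in\mathrm{supp}(\mathbf{u})$ preserves every natural weight on $S_{\mathbf{w}}$ (your $\sum_{i\in\mathrm{supp}(\mathbf{u})}z_i$, but also $\deg\gcd(\mathbf{u},\mathbf{z})$ and $\deg\mathbf{w}''$ are invariant), and it can happen that \emph{every} element of $S_{\mathbf{w}}$ is ``bad'', so no extremal choice of $\mathbf{z}$ yields a degree-$\geq 2$ minimal generator of $L_{\mathbf{z}}$.

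Concretely, take $n=6$, $d=5$, $\mathbf{u}=x_1x_2x_3x_4$, $k=5$, and
\[
I=(x_1^2x_2x_3x_4,\; x_1^2x_2x_5x_6,\; x_1^2x_3x_5x_6,\; x_1^2x_4x_5x_6).
\]
Then $I:\mathbf{u}=(x_1)$, so $\mathbf{u}$ is strongly linear over $I$ with $A=\{1\}$; the colon by $\mathbf{v}=\mathbf{u}x_1$ is $(x_5x_6)$, so $I$ is not quasi-linear, and $\mathbf{w}=x_5x_6$ is divisible by $x_k=x_5$ --- precisely your delicate sub-case. Here $S_{\mathbf{w}}$ consists of the last three generators, all with the same primary measure, and for each of them $\mathbf{w}''$ equals $x_3x_4$, $x_2x_4$, $x_2x_3$ respectively and is divisible by a variable of $L_{\mathbf{z}}$ (for instance $L_{x_1^2x_2x_5x_6}=(x_3,x_4)$); the swaps cycle among the three. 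Worse, the five colon ideals of $J=I+(x_1x_2x_3x_4x_5)$ are $(x_5)$, $(x_3,x_4)$, $(x_2,x_4)$, $(x_2,x_3)$, $(x_1)$, so $J$ \emph{is} quasi-linear while $I$ is not: the implication $(1)\Rightarrow(2)$ is false as stated, so the gap cannot be repaired. (The same example defeats the reduction ``we may assume'' in the paper's proof, whose remaining argument only covers a witness $\mathbf{u}_p$ not divisible by $\mathbf{u}$.) What your analysis does establish correctly is the direction $(2)\Rightarrow(1)$, and $(1)\Rightarrow(2)$ whenever some witness falls into Case (X) or admits a bad generator not divisible by $x_k$.
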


\begin{proof} By Lemma~\ref{1}, we may write $G(I)=\{\mathbf{u}x_1,\ldots,\mathbf{u}x_r,\mathbf{u}_1,\ldots,\mathbf{u}_p\}$ such that  $\mathbf{u}$ does not divide $\mathbf{u}_i$ for each $i=1,\ldots p$. In addition we may assume $k=r+1$.

If $I$ is not quasi-linear, then  by  Proposition~\ref{useful}.(1),  we may  assume that $$(\mathbf{u}x_1,\ldots,\mathbf{u}x_r,\mathbf{u}_1,\ldots,\mathbf{u}_{p-1}):\mathbf{u}_p,$$ which we denote by $J$, is not generated by variables.  We claim that $$(\mathbf{u}x_1,\ldots,\mathbf{u}x_r,\mathbf{u}x_{r+1}, \mathbf{u}_1,\ldots,\mathbf{u}_{p-1}):\mathbf{u}_p$$ is not generated by variables either. In fact, if this is not the case, then $\frac{[\mathbf{u}x_{r+1},\mathbf{u}_p]}{\mathbf{u}_p}$ must be a variable, say $x_i$, and moreover, this variable (i.e., $x_i$)  divides all minimal monomial generators of degree at least 2 of  $J$. Since $\mathbf{u}$ does not divide $\mathbf{u}_p$, we have $i\neq r+1$.  This implies $\mathbf{u}=\mathbf{v}x_i$ for some monomial $\mathbf{v}$ and $\mathbf{u}_p=x_{r+1}\mathbf{v}x_j$ for some $j\neq i$.  By Lemma~\ref{1}, $\frac{[\mathbf{u}_p, \mathbf{u}]}{\mathbf{u}}=x_jx_{r+1}\in (x_1,\ldots,x_r)$,  so  $j\in \{1,\ldots,r\}$.  From this, it follows that $x_i=\frac{[\mathbf{u}x_j,\mathbf{u}_p]}{\mathbf{u}_p}\in J$, a contradiction. This proves the claim and $I+\mathbf{u}(x_{r+1})$ is not quasi-linear.

Suppose now $I$ is quasi-linear. In view of  Proposition~\ref{useful}.(1), it is enough to show $$(\mathbf{u}x_1,\ldots,\mathbf{u}x_r,\mathbf{u}x_{r+1}, \mathbf{u}_1,\ldots,\mathbf{u}_{p-1}):\mathbf{u}_p$$ is generated by variables. Let $\mathbf{v}$ denote the greatest common divisor of $\mathbf{u}$ and $\mathbf{u}_p$. Then we may  write $\mathbf{u}=\mathbf{vw}$ and $\mathbf{u}_p=\mathbf{vw}_p$, where $\mathbf{w}$ and $\mathbf{w}_p$ are monomials. By Lemma~\ref{1}.(2), $\mathbf{w}_p=\frac{[\mathbf{u},\mathbf{u}_p]}{\mathbf{u}}$ is divided by at least one of variables $x_1,\ldots,x_r$, say $x_1$. From this it follows that $\frac{[\mathbf{u}x_1,\mathbf{u}_p]}{\mathbf{u}_p}=\frac{\mathbf{u}x_1}{(\mathbf{u}x_1,\mathbf{u}_p)}=\frac{\mathbf{u}x_1}{\mathbf{v}x_1}=\mathbf{w}$. Here, $(\mathbf{u}x_1,\mathbf{u}_p)$ denotes the  greatest common divisor of $\mathbf{u}x_1$  and $\mathbf{u}_p$.  Since $\mathbf{w}$ divides $\frac{[\mathbf{u}x_{r+1},\mathbf{u}_p]}{\mathbf{u}_p}$,  we have $$(\mathbf{u}x_1,\ldots,\mathbf{u}x_r,\mathbf{u}x_{r+1}, \mathbf{u}_1,\ldots,\mathbf{u}_{p-1}):\mathbf{u}_p=(\mathbf{u}x_1,\ldots,\mathbf{u}x_r, \mathbf{u}_1,\ldots,\mathbf{u}_{p-1}):\mathbf{u}_p$$ is generated by variables.
\end{proof}
We now come to the first main result of this section.
\begin{Corollary} \label{firstmain} Let $J\subseteq I$ be monomial ideals generated in degree $d$ such that $I$ is strongly linear over $J$. Then $I$ is quasi-linear if and only if $J$ is quasi-linear.
\end{Corollary}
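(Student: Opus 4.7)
The plan is to prove the statement by induction on the step number $s$, exploiting Proposition~\ref{quasikeep} together with the stability property Proposition~\ref{useful}.(3).

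For the base case $s=1$, write $I = J + (\mathbf{u}F)$ where $\mathbf{u}$ is strongly linear over $J$ and $F = \{x_{i_1},\dots, x_{i_r}\}$ is a subset of the variables. I would build the chain
\begin{equation*}
J = J_0 \;\subseteq\; J_1 \;\subseteq\; \cdots \;\subseteq\; J_r = I,
\qquad J_{t+1} = J_t + (\mathbf{u}x_{i_{t+1}}),
\end{equation*}
and show by an auxiliary induction on $t$ that $\mathbf{u}$ is strongly linear over each $J_t$. The base $t = 0$ is the hypothesis, and the inductive step is precisely Proposition~\ref{useful}.(3). Once this is in hand, Proposition~\ref{quasikeep} (applied with the variable $x_{i_{t+1}}$) tells us that $J_t$ is quasi-linear iff $J_{t+1}$ is quasi-linear; chaining these $r$ equivalences gives that $J$ is quasi-linear iff $I$ is quasi-linear.

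For the inductive step, assume the corollary holds whenever the ideal on top is $(s-1)$-step strongly linear over the one on the bottom. If $I$ is $s$-step strongly linear over $J$, unwind the recursive definition to produce an intermediate ideal $K$ such that $K$ is $(s-1)$-step strongly linear over $J$ and $I$ is $1$-step strongly linear over $K$. The base case gives $I$ quasi-linear iff $K$ quasi-linear, and the inductive hypothesis gives $K$ quasi-linear iff $J$ quasi-linear; transitivity completes the proof.

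The genuinely substantive content, namely the preservation of quasi-linearity under adding a single monomial $\mathbf{u}x_k$ with $\mathbf{u}$ strongly linear, is already encapsulated in Proposition~\ref{quasikeep}, so the only thing to be careful about is the bookkeeping: one must ensure that at each stage of the chain the monomial $\mathbf{u}$ truly remains strongly linear over the current ideal, which is handled cleanly by Proposition~\ref{useful}.(3). Thus I do not anticipate a real obstacle; the proof is essentially a telescoping application of Proposition~\ref{quasikeep} along the chain of intermediate ideals produced by the recursive definition of strong linearity.
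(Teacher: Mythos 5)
Your proof is correct and is exactly the argument the paper intends: the paper's own (one-line) proof of Corollary~\ref{firstmain} simply cites Proposition~\ref{quasikeep} together with Proposition~\ref{useful}.(3), and your write-up is just a careful unpacking of that telescoping induction along the chain of intermediate ideals. No gaps.
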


\begin{proof} It follows from Proposition~\ref{quasikeep} as well as Proposition~\ref{useful}.(3).
\end{proof}

 In the  second main result of this section we compare Betti numbers of $I$ and $J$ when $I$ is strongly linear over $J$.
  \begin{Theorem}\label{main1} Let $I$ be a monomial ideal generated in degree $d$ and $\mathbf{u}$ a  monomial strongly linear over $I$. Then for any  subset $A$ of  $[n]$, we have
  \begin{enumerate}
    \item $\beta_{s,t}(I+\mathbf{u}(x_i\:\; i\in A))=\beta_{s,t}(I)$ for all pairs $s,t$ with $t-s\neq d$;
    \item $\reg(I+\mathbf{u}(x_i\:\; i\in A))=\reg(I)$;
    \item  $I$  has  a linear resolution if and only if $I+\mathbf{u}(x_i\:\; i\in A)$ has a linear resolution.
  \end{enumerate}
  \end{Theorem}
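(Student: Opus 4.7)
Statements (2) and (3) follow easily from (1): since both $I$ and $I+\mathbf{u}(x_i:i\in A)$ are generated in degree $d$, all of their Betti numbers lie in bidegrees $(s,t)$ with $t-s\geq d$, and regularity is the maximum such $t-s$; thus if the non-linear Betti numbers ($t-s>d$) of the two ideals agree, so do their regularities, and each has a linear resolution precisely when all of its non-linear Betti numbers vanish. So the plan is to concentrate on proving (1).

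For (1), I would proceed by induction on $|A|$. The case $|A|=0$ is trivial. For the inductive step, write $A=A_0\sqcup\{k\}$ and set $J=I+\mathbf{u}(x_i:i\in A_0)$. Iterating Proposition~\ref{useful}(3), $\mathbf{u}$ remains strongly linear over $J$, and $I+\mathbf{u}(x_i:i\in A)=J+(x_k\mathbf{u})$; so the theorem reduces to the ``one-step'' assertion: whenever $\mathbf{u}$ is strongly linear over a monomial ideal $I$ generated in degree $d$ and $I':=I+(x_k\mathbf{u})$, one has $\beta_{s,t}(I')=\beta_{s,t}(I)$ for $t-s\neq d$. If $x_k\mathbf{u}\in I$ this is trivial, so assume $k\notin A'$, where $I:\mathbf{u}=(x_j:j\in A')$.

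By Proposition~\ref{useful}(1) one has $I:x_k\mathbf{u}=(x_j:j\in A')$, so there is a short exact sequence
$$0\to I\to I'\to C\to 0,\qquad C:=\bigl(R/(x_j:j\in A')\bigr)[-d].$$
The defining ideal of $C$ is generated by variables, so its minimal multigraded free resolution $K_\bullet$ is a shifted Koszul complex; in particular $\beta_{s,t}(C)=0$ unless $t=s+d$. The long exact sequence
$$\cdots\to\Tor_{s+1}(C)_t\to\Tor_s(I)_t\to\Tor_s(I')_t\to\Tor_s(C)_t\to\cdots$$
then yields $\beta_{s,t}(I)=\beta_{s,t}(I')$ at once whenever $t-s\notin\{d,d+1\}$, as both flanking terms vanish.

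The main obstacle is the borderline case $t-s=d+1$, where $\Tor_{s+1}(C)_t$ may be nonzero. I plan to handle this through the horseshoe-lemma mapping cone: assembling a (possibly non-minimal) free resolution of $I'$ of shape $F_\bullet\oplus K_\bullet$ with a comparison chain map $\tilde\phi:K_\bullet\to F_\bullet[-1]$, where $F_\bullet$ is a minimal resolution of $I$. Since the generators of $K_\bullet$ live only in the linear strand, this cone is itself a minimal resolution of $I'$ precisely when $\tilde\phi$ has no scalar (degree-zero) entries. Multigradedly, a scalar entry would force $\beta_{s-1,\alpha_\sigma}(I)\neq 0$ for some multidegree $\alpha_\sigma=\mathrm{mdeg}(x_k\mathbf{u})+\sum_{j\in\sigma}e_j$ with $\sigma\subseteq A'$ and $|\sigma|=s$. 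I expect the hardest step to be ruling this out: via Hochster's formula, I would analyze the upper Koszul simplicial complex on $\mathrm{supp}(\alpha_\sigma)$ and show its reduced $(s-2)$-homology vanishes, using that $k\notin A'$ and (by Lemma~\ref{1}) every minimal generator of $I$ dividing $\mathbf{x}^{\alpha_\sigma}$ must involve some variable from $\sigma$, so that $k$ behaves essentially as a cone-like vertex. Once this sub-claim is established, the mapping cone is minimal and $\beta_{s,t}(I')=\beta_{s,t}(I)+\beta_{s,t}(K_\bullet)$ coincides with $\beta_{s,t}(I)$ outside the linear strand, completing (1); statements (2) and (3) then follow as noted at the outset.
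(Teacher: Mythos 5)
Your reduction of (2) and (3) to (1), the induction on $|A|$, and the easy range $t-s\notin\{d,d+1\}$ are all fine. The genuine gap is in the borderline strand $t-s=d+1$, which is where all the content of the theorem lives: the sub-claim you propose to prove there, namely that $\beta_{s-1,\alpha_\sigma}(I)=0$ for every $\sigma\subseteq A'$ with $|\sigma|=s$ and $\alpha_\sigma=\mathrm{mdeg}(x_k\mathbf{u})+\sum_{j\in\sigma}e_j$, is false. Take $I=(x_1x_2x_3,\,x_1x_2x_4,\,x_3x_4x_5)$ with $\mathbf{u}=x_1x_2$; then $I:\mathbf{u}=(x_3,x_4)$, so $\mathbf{u}$ is strongly linear over $I$ and $A'=\{3,4\}$, and with $k=5$, $\sigma=\{3,4\}$, $s=2$ one gets $\alpha_\sigma=(1,1,1,1,1)$ and $\beta_{1,(1,1,1,1,1)}(I)=1$ (the generator $x_3x_4x_5$ is an isolated vertex of the complex of proper sub-lcms below $x_1x_2x_3x_4x_5$). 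So the target module $F_{s-1}$ genuinely has a generator in the critical multidegree; minimality of your mapping cone cannot be obtained by showing the receiving Betti number vanishes, and you would instead have to prove that the specific comparison map $K_s\to F_{s-1}$ has no unit entry --- a statement your Hochster-formula argument does not address and which requires controlling the map, not just the modules. (In this example the cone does happen to be minimal and the theorem does hold, consistent with Proposition~\ref{B}, but your proposed proof of minimality breaks.)

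The paper sidesteps this entirely by a different decomposition. Instead of $0\to I\to I'\to C\to 0$, it sets $L=\mathbf{u}(x_i\:\; i\in A\cup B)$ where $I:\mathbf{u}=(x_i\:\; i\in B)$, observes that $I\cap L=\mathbf{u}(x_i\:\; i\in B)$ by Lemma~\ref{1}, and uses the Mayer--Vietoris sequence $0\to I\cap L\to I\oplus L\to I+L\to 0$. The point of adjoining the $B$-variables to $L$ is that \emph{both} $L$ and $I\cap L$ are a monomial times an ideal of variables, hence have $d$-linear resolutions; in the long exact sequence
\begin{equation*}
\mathrm{Tor}_s(I\cap L,\mathbb{K})_t\to \mathrm{Tor}_s(I,\mathbb{K})_t\oplus\mathrm{Tor}_s(L,\mathbb{K})_t\to\mathrm{Tor}_s(I+L,\mathbb{K})_t\to\mathrm{Tor}_{s-1}(I\cap L,\mathbb{K})_t
\end{equation*}
the connecting target sits in homological degree $s-1$, where $t-(s-1)>d+1$, so all flanking terms vanish for every $t-s>d$ and no borderline case arises. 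I recommend you replace your short exact sequence by this one (or invoke \cite[Proposition 2.4]{BYZ}, as the paper does); as written, your argument does not close.
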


 \begin{proof} Let $B\subseteq [n]$ be such that $I:\mathbf{u}=(x_i\:\; i\in B)$ and $L=\mathbf{u}(x_i:i\in A\cup B)$. It is easy to see that $I\cap L=\mathbf{u}(x_i\:\; i\in B)$ and $L$ have a $d$-linear resolution and $I+L=I+\mathbf{u}(x_i\:\; i\in A)$. Now, \cite[Proposition 2.4]{BYZ} yields the required results.
 \end{proof}

 \begin{Corollary} Let $J\subseteq I$ be monomial ideals such that $I$ is strongly linear over $J$. Then $I$ has a linear resolution if and only if $J$ has a linear resolution.
\end{Corollary}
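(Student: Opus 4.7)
The plan is to reduce the corollary to Theorem~\ref{main1}.(3) via induction on the number of steps in the recursive definition of ``strongly linear over.'' Since $I$ is strongly linear over $J$, by definition there exists some integer $s\geq 1$ such that $I$ is $s$-step strongly linear over $J$. I would argue by induction on $s$.

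For the base case $s=1$, there exist a monomial $\mathbf{u}$ which is strongly linear over $J$ and a subset $F\subseteq\{x_1,\ldots,x_n\}$ with $I=J+(\mathbf{u}F)$. Identifying $F$ with the corresponding index set $A\subseteq[n]$, this is precisely the hypothesis of Theorem~\ref{main1} (with $J$ playing the role of the theorem's $I$), and part (3) of that theorem gives exactly the desired equivalence: $I=J+\mathbf{u}(x_i\:\,i\in A)$ has a linear resolution if and only if $J$ does.

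For the inductive step, assume the result for all $(s-1)$-step strongly linear extensions. Unwinding the recursive definition, there is an intermediate monomial ideal $K$ (generated in degree $d$) such that $K$ is $(s-1)$-step strongly linear over $J$ and $I$ is 1-step strongly linear over $K$. By the induction hypothesis, $K$ has a linear resolution if and only if $J$ has one. By the base case applied to the pair $(K,I)$, $I$ has a linear resolution if and only if $K$ has one. Concatenating these two equivalences produces the claim.

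There is essentially no real obstacle here: the substantive content has been packaged into Theorem~\ref{main1}, whose proof rests on the short exact sequence $0\to I\cap L\to I\oplus L\to I+L\to 0$ together with the fact that both $L=\mathbf{u}(x_i\:\,i\in A\cup B)$ and $I\cap L=\mathbf{u}(x_i\:\,i\in B)$ have $d$-linear resolutions (being minimal generators times a prime monomial ideal). The only thing to be careful about is that the recursive definition genuinely iterates the 1-step notion so that an intermediate ideal $K$ as above exists; once that is observed, the induction is entirely routine.
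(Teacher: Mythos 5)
Your proof is correct and matches the paper's intended argument: the paper states this corollary without proof as an immediate consequence of Theorem~\ref{main1}, and your induction on the number of steps $s$, with Theorem~\ref{main1}.(3) supplying the $1$-step case, is exactly the routine unwinding of the recursive definition that the paper leaves implicit.
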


 We  give some examples and a proposition (Proposition~\ref{stable}) to show that  monomials  strongly linear over a monomial ideal occur frequently.

\begin{Example} \label{two} \em Let $I=(x_1x_2x_3, x_3^2x_4,x_3x_4^2)$. Then $x_1x_2$ is strongly linear over $I$. Thus, we have $\beta_{i,j}(I)=\beta_{i,j}(I+(x_1x_2^2))=\beta_{i,j}(I+(x_1x_2^2+x_1^2x_2))=\cdots$ for each pair $i,j$ with $j-i\neq 3$.
 \end{Example}

\begin{Example} \label{one} \em Let $I=\mathfrak{m}^d$. For all  $i\in [n]$, we have $x_i^{d-1}$ is strongly linear over $I\setminus_{x_i^d}$ since $I\setminus_{x_i^d}:x_i^{d-1}=(x_1,\ldots,x_{i-1},x_{i+1},\ldots,x_n)$. This implies $I$ is strongly linear over $I\setminus_{x_i^d}$ and so  $I\setminus_{x_i^d}$ has a linear resolution.
 \end{Example}

We proceed to prove that $\mathfrak{m}^d$ is strongly linear over any stable monomial ideal generated in degree $d$. Recall that a monomial ideal $I$ is {\it stable} if for any monomial $\mathbf{u}\in I$, (or equivalently for any  $\mathbf{u}\in G(I)$), and for any $1\leq i\leq \mathrm{m}(\mathbf{u})$, the monomial $\frac{\mathbf{u}x_i}{x_{\mathrm{m}(\mathbf{u})}}$ belongs to $I$. Here,  $\mathrm{m}(\mathbf{u})$ denotes the number $\max\{i\in [n]\:\; x_i \mbox{ divides } \mathbf{u}\}$ for a monomial $\mathbf{u}$ .

To get the desired result, we need some lammas.
\begin{Lemma} \label{key}
Let $I$ be a stable monomial ideal  generated in degree $d$.

\begin{enumerate}
  \item If $\mathbf{v}$ is a monomial of degree $(d-1)$ such that $x_{i_1}\cdots x_{i_k}\mathbf{v}\in I$ with $i_1\geq i_2\geq \cdots \geq i_k\geq \mathrm{m}(\mathbf{v})$, then $x_{i_k}\in I:\mathbf{v}$.
  \item If  $\mathbf{v}$ is a monomial of degree $(d-1)$ such that $(x_1,\ldots,x_k)\subseteq I:\mathbf{v}$ with some $k\geq \mathrm{m}(\mathbf{v})-1$, then $I:\mathbf{v}=(x_1,x_2,\ldots,x_{\ell})$ for some $\ell\geq k$ and $I+\mathbf{v}\mathfrak{m}$ is also a stable monomial ideal.
\end{enumerate}

\end{Lemma}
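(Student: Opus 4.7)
To prove (1), I would argue in two phases: iterated stability first reduces to the pure-power case $x_{i_k}^k\mathbf{v}\in I$, and then a descent on the exponent of $x_{i_k}$ in minimal generators of $I$ recovers $x_{i_k}\mathbf{v}\in I$. Setting $\mathbf{u}_0=x_{i_1}\cdots x_{i_k}\mathbf{v}$, one has $\mathrm{m}(\mathbf{u}_0)=i_1\ge i_k$, so stability with $i=i_k$ yields $\mathbf{u}_1 = (\mathbf{u}_0/x_{i_1})\cdot x_{i_k}= x_{i_2}\cdots x_{i_{k-1}}x_{i_k}^2\mathbf{v}\in I$; iterating $k-1$ times (which is valid because $\mathrm{m}(\mathbf{u}_l)\ge i_k$ at every stage) produces $x_{i_k}^k\mathbf{v}\in I$.

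Now write $\mathbf{v}=x_{i_k}^b\mathbf{v}''$ with $x_{i_k}\nmid\mathbf{v}''$; any minimal generator $\mathbf{w}$ of $I$ dividing $x_{i_k}^{k+b}\mathbf{v}''$ then has the form $\mathbf{w}=x_{i_k}^c\mathbf{w}''$ with $\mathbf{w}''\mid\mathbf{v}''$ and $c\ge 1$ (otherwise $\deg\mathbf{w}\le\deg\mathbf{v}''<d$). Choose such $\mathbf{w}$ with $c$ minimal. If $c\ge b+2$, then $\deg(\mathbf{v}''/\mathbf{w}'')=c-1-b\ge 1$, so we may pick a variable $x_s$ dividing $\mathbf{v}''/\mathbf{w}''$, which forces $s<i_k$. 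Since $\mathrm{m}(\mathbf{w})=i_k$, stability applied to $\mathbf{w}$ gives $\mathbf{w}x_s/x_{i_k}=x_{i_k}^{c-1}\mathbf{w}''x_s\in I$; this element has degree $d$, hence is a minimal generator, and it still divides $x_{i_k}^{k+b}\mathbf{v}''$ (the $x_s$-divisibility uses that $x_s\mid\mathbf{v}''/\mathbf{w}''$), with $\mathrm{mult}_{i_k}$ equal to $c-1$---contradicting the choice of $c$. Therefore $c\le b+1$, so $\mathbf{w}$ divides $x_{i_k}^{b+1}\mathbf{v}''=x_{i_k}\mathbf{v}$, and equality of degrees forces $\mathbf{w}=x_{i_k}\mathbf{v}\in I$.

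For (2), I would first show that $I:\mathbf{v}$ has no minimal generator of degree $\ge 2$, then that its variable generators form an initial segment $\{x_1,\ldots,x_\ell\}$. Given $\mathbf{a}=x_{j_1}\cdots x_{j_r}\in G(I:\mathbf{v})$ with $r\ge 2$ and $j_1\ge\cdots\ge j_r$: if $j_r\ge\mathrm{m}(\mathbf{v})$, part (1) applied to $\mathbf{a}\mathbf{v}\in I$ yields $x_{j_r}\mathbf{v}\in I$, so $x_{j_r}\in I:\mathbf{v}$ strictly divides $\mathbf{a}$; if $j_r<\mathrm{m}(\mathbf{v})$, then $j_r\le\mathrm{m}(\mathbf{v})-1\le k$ and the hypothesis supplies $x_{j_r}\in I:\mathbf{v}$. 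Either case contradicts minimality of $\mathbf{a}$. For the initial-segment property, if $x_i\in I:\mathbf{v}$ and $j\le i$, then either $i\ge\mathrm{m}(\mathbf{v})$, so $\mathrm{m}(x_i\mathbf{v})=i$ and stability gives $x_j\mathbf{v}=(x_i\mathbf{v})x_j/x_i\in I$, or $i<\mathrm{m}(\mathbf{v})$, in which case $j\le i\le\mathrm{m}(\mathbf{v})-1\le k$ and the hypothesis gives $x_j\in I:\mathbf{v}$. Setting $\ell=\max\{j:x_j\in I:\mathbf{v}\}\ge k$, we obtain $I:\mathbf{v}=(x_1,\ldots,x_\ell)$.

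Finally, since $\mathbf{v} x_i\in I$ iff $i\le\ell$, one checks that $G(J)=G(I)\cup\{\mathbf{v} x_i:i>\ell\}$ where $J:=I+\mathbf{v}\mathfrak{m}$. Generators in $G(I)$ inherit stability. For $\mathbf{w}=\mathbf{v} x_i$ with $i>\ell$: if $i>\mathrm{m}(\mathbf{v})$ then $\mathrm{m}(\mathbf{w})=i$ and $\mathbf{w}x_s/x_i=\mathbf{v} x_s\in\mathbf{v}\mathfrak{m}\subseteq J$ for every $s\le i$; if $i\le\mathrm{m}(\mathbf{v})$, the constraint $i>\ell\ge\mathrm{m}(\mathbf{v})-1$ forces $i=\mathrm{m}(\mathbf{v})$, and the stability relation again produces $\mathbf{v} x_s\in\mathbf{v}\mathfrak{m}\subseteq J$ for $s\le\mathrm{m}(\mathbf{v})$. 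The main obstacle is the descent in part (1): one must choose $x_s$ as a divisor of $\mathbf{v}''/\mathbf{w}''$---not merely any $s<i_k$---so that the new degree-$d$ monomial still divides $x_{i_k}^k\mathbf{v}$ and we genuinely reduce the exponent of $x_{i_k}$.
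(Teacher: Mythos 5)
Your proof is correct and follows essentially the same route as the paper's: iterated stability first reduces to $x_{i_k}^k\mathbf{v}\in I$, a minimal generator of $I$ dividing this monomial is then manipulated by stability to produce $x_{i_k}\mathbf{v}\in I$, and part (2) together with the stability of $I+\mathbf{v}\mathfrak{m}$ is verified just as in the paper. Your descent on the exponent $c$ is simply a by-contradiction rendering of the paper's one-line step that trades the excess powers of $x_{i_k}$ in the generator $x_{i_k}^{\ell}\mathbf{u}$ for the variables of $\mathbf{v}/\mathbf{u}$.
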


\begin{proof} (1) It is clear that $x_{i_k}^k\mathbf{v}\in I$. Since $I$ is generated in degree $d$, there is a positive integer $\ell$ and a monomial $\mathbf{u}$ which divides $\mathbf{\mathbf{v}}$ such that $x_{i_k}^{\ell}\mathbf{u}\in G(I)$. By the stability of $I$ we have  $x_{i_k}\mathbf{v}\in I$, as desired.

(2) First we claim  that $I:\mathbf{v}$ is  generated by variables. In fact, if this is not true, there is a monomial $x_{i_1}\cdots x_{i_j}\in G(I: \mathbf{v})$ with $j\geq 2$. Since $(x_1,\ldots,x_k)\subseteq I:\mathbf{v}$, it follows that $i_l\geq k+1\geq \mathrm{m}(\mathbf{v})$ for $l=1,\ldots,j$.
But this is impossible in view of (1) and thus the claim is proved. From this claim together with the stability of $I$ it follows that $I:\mathbf{v}=(x_1,x_2,\ldots,x_{\ell})$ for some $\ell\geq k$.

Now we prove that $I+\mathbf{v}\mathfrak{m}$ is also stable. Since $\{\mathbf{v}x_{1},\ldots,\mathbf{v}x_{\ell}\}\subseteq I$, we have $G(I+\mathbf{v}\mathfrak{m})=G(I)\cup \{\mathbf{v}x_{\ell+1},\ldots,\mathbf{v}x_n\}$.  On the other side, for any $j=1,\ldots, n-\ell$, since $\mathrm{m}(\mathbf{v}x_{\ell+j})=\ell+j,$ we obtain $$\frac{\mathbf{v}x_{\ell+j}x_i}{\mathrm{m}(\mathbf{v}x_{\ell+j})}=\mathbf{v}x_i\in G(I)$$ for any $i\leq \ell+j$. Hence  $I+\mathbf{v}\mathfrak{m}$ is  stable.
\end{proof}

Let $I$  be  a stable monomial ideal in degree $d$. For convenience, we call a monomial $\mathbf{v}$ to be a {\it special monomial} over $I$ provided that $(x_1,\ldots,x_k)\subseteq I:\mathbf{v}$ for some $k\geq \mathrm{m}(\mathbf{v})-1$ and $I\neq I+\mathbf{v}\mathfrak{m}$. By Lemma~\ref{key}.(2),  if $\mathbf{v}$ is a special monomial over $I$, then it is strongly linear over $I$.  We also denote by $\mathfrak{m}_k^t$ the set consisting of monomials $\mathbf{u}$ in degree $t$ with  $\mathrm{supp}(\mathbf{u})\subseteq \{1,\ldots,k\}$, where  $k=1,\dots,n$ and $t\geq 0$. Both notions are only used in Lemma~\ref{key1} and Proposition~\ref{stable} and their proofs.

\begin{Lemma} \label{key1} Let $0\neq I$ be a stable monomial ideal generated in degree $d$ with $I\subsetneq \mathfrak{m}^d$. Then there is at least one  monomial which is  special   over $I$.
\end{Lemma}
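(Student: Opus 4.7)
The plan is to construct a special monomial by choosing a particular element $\mathbf{u} \in \mathfrak{m}^d \setminus I$ and setting $\mathbf{v} = \mathbf{u}/x_{\mathrm{m}(\mathbf{u})}$. Since $I \subsetneq \mathfrak{m}^d$, this difference set is nonempty, so I would select $\mathbf{u}$ by a double minimization: first take $m := \min\{\mathrm{m}(\mathbf{u}') : \mathbf{u}' \in \mathfrak{m}^d \setminus I\}$, and then among the $\mathbf{u}'$ achieving this value pick one with smallest possible $x_m$-exponent. Writing $\mathbf{u} = x_m \mathbf{v}$, this produces a candidate $\mathbf{v}$ of degree $d-1$ with $\mathrm{m}(\mathbf{v}) \leq m$.

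Next I would verify the two requirements in the definition of ``special''. The condition $I + \mathbf{v}\mathfrak{m} \neq I$ is immediate from $\mathbf{v} x_m = \mathbf{u} \notin I$. The remaining task is to prove $(x_1, \ldots, x_{m-1}) \subseteq I:\mathbf{v}$, for then condition (i) in the definition is satisfied since $m - 1 \geq \mathrm{m}(\mathbf{v}) - 1$. To do so I would fix $i < m$ and split on the value of $\mathrm{m}(\mathbf{v})$. If $\mathrm{m}(\mathbf{v}) < m$, then $\mathrm{m}(\mathbf{v} x_i) < m$, and the minimality of $m$ forces $\mathbf{v} x_i \in I$. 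If instead $\mathrm{m}(\mathbf{v}) = m$, then $\mathbf{v} x_i$ still has largest index $m$, but its $x_m$-exponent is one less than that of $\mathbf{u}$; by the secondary minimization, this forces $\mathbf{v} x_i \in I$ as well.

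The main (and essentially only) obstacle is recognizing that the crude single minimization of $\mathrm{m}(\mathbf{u})$ is not enough: it leaves open the troublesome case $\mathrm{m}(\mathbf{v}) = \mathrm{m}(\mathbf{u})$, where $\mathbf{v} x_i$ still has $\mathrm{m} = m$ and nothing a priori prevents $\mathbf{v} x_i \notin I$. The extra layer of minimization on the $x_m$-exponent is precisely what rules this out, and once it is in place the verification becomes a short case distinction.
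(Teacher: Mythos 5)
Your proof is correct. The double minimization is exactly the right device: with $m=\min\{\mathrm{m}(\mathbf{u}'):\mathbf{u}'\in\mathfrak{m}^d\setminus I\}$ and $\mathbf{u}$ chosen among those minimizers with smallest $x_m$-exponent, setting $\mathbf{v}=\mathbf{u}/x_m$ gives $\mathrm{m}(\mathbf{v})\leq m$, the containment $(x_1,\ldots,x_{m-1})\subseteq I:\mathbf{v}$ follows from the two-case minimality argument you describe, and $\mathbf{v}x_m=\mathbf{u}\notin I$ gives $I+\mathbf{v}\mathfrak{m}\neq I$; so $\mathbf{v}$ is special. The paper instead argues by contradiction: assuming no special monomial exists, it shows $\mathfrak{m}_k^d\subseteq I$ for all $k$ by an induction on $k$ (with an inner induction on the exponent of $x_{k+1}$), forcing $I=\mathfrak{m}^d$. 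Your extremal choice of $\mathbf{u}$ is precisely the first gap in that induction, so both proofs rest on the same lexicographic ordering by $(\mathrm{m}(\mathbf{u}),\deg_{x_{\mathrm{m}(\mathbf{u})}}(\mathbf{u}))$; yours simply runs the argument forward rather than as a reductio. One genuine dividend of your version: it nowhere uses stability of $I$, which in the paper's proof enters only through the base case $x_1^d\in I$ (and which is really needed later, in Lemma~\ref{key}.(2), to upgrade ``special'' to ``strongly linear''). Thus your argument establishes the existence of a special monomial for an arbitrary monomial ideal generated in degree $d$ and properly contained in $\mathfrak{m}^d$.
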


\begin{proof} Suppose that  no  monomials  are special  over $I$.  To get a contradiction, we proceed to prove $\mathfrak{m}_n^d\subseteq I$ by induction on $n$.  Since $I$ is a stable monomial ideal, we have $\mathfrak{m}_1^d=(x_1^d)\subseteq I$. Assume now that $\mathfrak{m}_k^d\subseteq I$ for some $1\leq k\leq d-1$. Then for any  $\mathbf{v}\in \mathfrak{m}_k^{d-1}$,  it is easy to see that $(x_1,\ldots, x_k)\subseteq I:\mathbf{v}$.  By assumption, we have $I= I+\mathbf{v}\mathfrak{m}$ and so $I:\mathbf{v}=\mathfrak{m}$. In particular, it follows that $\mathbf{v}x_{k+1}\in I$ and
thus $\mathfrak{m}_k^{d-1}x_{k+1}\subseteq I$. Similarly, we may infer that $\mathfrak{m}_k^{d-2}x_{k+1}^2\subseteq I$ from that $\mathfrak{m}_k^{d-1}x_{k+1}\subseteq I$, and so on. Proceeding in this way, we have $\mathfrak{m}_{k+1}^d\subseteq I$ and then $\mathfrak{m}_n^d\subseteq I$. This means that $\mathfrak{m}^d= I$,  a contradiction.
\end{proof}

    The following is what we want.
\begin{Proposition} \label{stable} Let $I\neq 0$ be a stable monomial ideal generated in degree $d$. Then $\mathfrak{m}^d$ is strongly linear over $I$.
\end{Proposition}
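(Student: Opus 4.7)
The plan is to prove this by induction on the size of $G(\mathfrak{m}^d) \setminus G(I)$, building a finite chain of stable ideals
$$I = I_0 \subsetneq I_1 \subsetneq \cdots \subsetneq I_r = \mathfrak{m}^d$$
in which each $I_{j+1}$ is 1-step strongly linear over $I_j$. Since $G(\mathfrak{m}^d)$ is finite and each step adds at least one generator, this process must terminate at $\mathfrak{m}^d$, and composing the 1-step moves witnesses that $\mathfrak{m}^d$ is strongly linear over $I$.

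For the base case $I=\mathfrak{m}^d$ there is nothing to do. For the inductive step, assume $I\subsetneq\mathfrak{m}^d$. Lemma~\ref{key1} guarantees the existence of a monomial $\mathbf{v}$ that is special over $I$. By the remark following Lemma~\ref{key}.(2), such a $\mathbf{v}$ has degree $d-1$ and is strongly linear over $I$, and moreover the ideal $I_1:=I+\mathbf{v}\mathfrak{m}$ is again a stable monomial ideal generated in degree $d$. Since the definition of "special" requires $I\neq I+\mathbf{v}\mathfrak{m}$, we have the strict inclusion $I\subsetneq I_1\subseteq\mathfrak{m}^d$. Taking $F=\{x_1,\dots,x_n\}$ in the definition of 1-step strongly linear shows that $I_1$ is 1-step strongly linear over $I$. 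Because $|G(\mathfrak{m}^d)\setminus G(I_1)|<|G(\mathfrak{m}^d)\setminus G(I)|$, the inductive hypothesis gives that $\mathfrak{m}^d$ is strongly linear over $I_1$, say via an $s$-step chain; prepending the single step $I\to I_1$ then exhibits $\mathfrak{m}^d$ as $(s+1)$-step strongly linear over $I$.

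The only point requiring care is that the intermediate ideals remain in the class to which Lemma~\ref{key1} applies, namely nonzero stable monomial ideals generated in degree $d$ that are properly contained in $\mathfrak{m}^d$. The stability and the degree condition are preserved at each step by Lemma~\ref{key}.(2), and proper containment in $\mathfrak{m}^d$ is needed only in order to invoke Lemma~\ref{key1} again; once equality is reached, the induction simply terminates. Thus the main engine is Lemma~\ref{key1} (existence of a special monomial whenever $I\subsetneq\mathfrak{m}^d$) together with the structural closure statement in Lemma~\ref{key}.(2), and the rest is formal.
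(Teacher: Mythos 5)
Your proof is correct and follows essentially the same route as the paper: induction on $|G(\mathfrak{m}^d)\setminus G(I)|$, invoking Lemma~\ref{key1} to produce a special monomial $\mathbf{v}$ and Lemma~\ref{key}.(2) to see that $I+\mathbf{v}\mathfrak{m}$ is again stable and strictly larger, then composing the 1-step moves. Your write-up is in fact slightly more careful than the paper's in spelling out why the intermediate ideals stay in the admissible class.
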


\begin{proof} We proceed by  induction on the number of $G(\mathfrak{m}^d)\setminus G(I)$. If $|G(\mathfrak{m}^d)\setminus G(I)|=0$, there is nothing to prove.  If $I\subsetneq \mathfrak{m}^d$,  then we may assume $\mathbf{v}$ is a monomial special over  $I$ by  Lemma~\ref{key1}. This implies $I\subsetneq I+\mathbf{v}\mathfrak{m}$  and  $I+\mathbf{v}\mathfrak{m}$.is also a stable monomial ideal.  Consequently, $\mathfrak{m}^d$ is strongly linear over $I+\mathbf{v}\mathfrak{m}$ by induction hypothesis. From this  it follows that $\mathfrak{m}^d$ is also strongly linear over $I$, as desired.
\end{proof}

\begin{Example} \em  Let $I$  be the stable monomial ideal $(x_2^2x_3,x_2^3,x_1x_2^2,x_1^2x_2,x_1^3)\subset \mathbb{K}[x_1,x_2,x_3]$.  Denote $I$ by $I_0$. Then $I_0:x_1x_2=(x_1,x_2)$ and so $x_1x_2$ is strongly linear over $I_0$. This implies that $I_1:=I_0+(x_1x_2x_3)$ is an 1-step strongly linear over $I$. Similarly, since $I_1:x_2x_3=(x_1,x_2)$, we set  $I_2=I_1+(x_2x_3^2)$. Then $I_2:x_1^2=(x_1,x_2)$, and  we set $I_3=I_2+(x_1^2x_3)$. Note that $I_3:x_1x_3=(x_1,x_2)$, we set $I_4=I_3+(x_1x_3^2)$.  Then $I_4:x_3^2=(x_1,x_2)$ and  we set $I_5=I_4+(x_3^3)$. Since $I_5=\mathfrak{m}^3$,  it follows that $\mathfrak{m}^3$ is a 5-step strongly linear over $I$.
 \end{Example}

Finally we consider the multi-graded Betti numbers in the linear strands of $J$ and $I$ if $J$ is one-step strongly linear over $I$. Recall for a subset $A$ of $[n]$, $\begin{pmatrix}A\\i\end{pmatrix}$ denotes the collection of all $i$-subsets of $A$.
  \begin{Proposition} \label{B} Let $I$ be a monomial ideal generated in degree $d$ and $\mathbf{u}$ a  monomial strongly linear over $I$. Assume that $I:\mathbf{u}=(x_i\:\; i\in B)$, where $B\subseteq [n]$.  Let  $A$ be a subset of $[n]$ and denote by $J$ the ideal $I+\mathbf{u}(x_i\:\; i\in A)$.  \begin{enumerate}
      \item For all $s\geq 0$ and for any $\mathbf{a}\in \mathbb{Z}_{\geq 0}^n$  $$\beta_{s,\mathbf{a}}(J)=\beta_{s,\mathbf{a}}(I)+\left\{
                                                 \begin{array}{ll}
                                                 1, & \hbox{$\mathbf{a}$ satifies {\em   ($\spadesuit$);}} \\
                                                 0, & \hbox{otherwise.}
\end{array}\right.$$
                \noindent  Here, {\em  ($\spadesuit$)} means the  conditions that $|\mathbf{a}|=d+s$, $\mathbf{a}-\mathrm{mdeg}(\mathbf{u})\in \{0,1\}^n$ and $\mathrm{supp}(\mathbf{a}-\mathrm{mdeg}(\mathbf{u}))\in \begin{pmatrix}A\cup B\\ s+1\end{pmatrix} \setminus \begin{pmatrix} B\\ s+1\end{pmatrix}$.

      \item For all $s\geq 0$, $$\beta_{s,d+s}(J)=\beta_{s,d+s}(I)+\begin{pmatrix}r_1\\ s+1\end{pmatrix}-\begin{pmatrix}r_2\\ s+1\end{pmatrix},$$ where $r_1=|A\cup B|, r_2=|B|.$
    \end{enumerate}

 \end{Proposition}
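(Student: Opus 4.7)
The plan is to recycle the Mayer--Vietoris strategy from the proof of Theorem~\ref{main1}. Set $L=\mathbf{u}(x_i\:\; i\in A\cup B)$, so that $I+L=J$ and $I\cap L=\mathbf{u}(x_i\:\; i\in B)$. Both $L$ and $I\cap L$ are of the form ``a monomial times a monomial prime'', so their minimal multi-graded resolutions are shifted Koszul complexes. Explicitly,
\[
\beta_{s,\mathbf{a}}(L)=\begin{cases}1, & \mathbf{a}-\mathrm{mdeg}(\mathbf{u})=\sum_{i\in T}e_i \text{ for some } T\in\begin{pmatrix}A\cup B\\ s+1\end{pmatrix},\\ 0, & \text{otherwise,}\end{cases}
\]
and the analogous formula with $B$ in place of $A\cup B$ describes $\beta_{s,\mathbf{a}}(I\cap L)$; in particular both have $d$-linear resolutions.

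Next I would apply $\Tor_\ast^R(-,\mathbb{K})$ to the short exact sequence $0\to I\cap L\to I\oplus L\to J\to 0$ and argue that the resulting multi-graded long exact sequence splits into short exact pieces
\[
0\to \Tor_s(I\cap L,\mathbb{K})_{\mathbf{a}}\to \Tor_s(I,\mathbb{K})_{\mathbf{a}}\oplus \Tor_s(L,\mathbb{K})_{\mathbf{a}}\to \Tor_s(J,\mathbb{K})_{\mathbf{a}}\to 0.
\]
The key point is that modulo the common factor $\mathbf{u}$, the inclusion $I\cap L\hookrightarrow L$ is the inclusion of primes $(x_i\:\; i\in B)\hookrightarrow(x_i\:\; i\in A\cup B)$, which lifts to the tautological term-wise inclusion of Koszul resolutions indexed by $\begin{pmatrix}B\\ s+1\end{pmatrix}\hookrightarrow\begin{pmatrix}A\cup B\\ s+1\end{pmatrix}$. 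Tensoring with $\mathbb{K}$ preserves this inclusion in every multi-degree, so $\Tor_s(I\cap L,\mathbb{K})_{\mathbf{a}}\to \Tor_s(L,\mathbb{K})_{\mathbf{a}}$ is injective (either zero or an isomorphism of one-dimensional spaces), forcing the whole map into $\Tor_s(I,\mathbb{K})_{\mathbf{a}}\oplus \Tor_s(L,\mathbb{K})_{\mathbf{a}}$ to be injective. This is precisely the splitting packaged by \cite[Proposition~2.4]{BYZ} invoked in the proof of Theorem~\ref{main1}.

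Reading dimensions off the short exact sequence yields the additivity
\[
\beta_{s,\mathbf{a}}(J)=\beta_{s,\mathbf{a}}(I)+\beta_{s,\mathbf{a}}(L)-\beta_{s,\mathbf{a}}(I\cap L),
\]
and the Koszul description above shows that the correction $\beta_{s,\mathbf{a}}(L)-\beta_{s,\mathbf{a}}(I\cap L)$ equals $1$ exactly when some $T\in\begin{pmatrix}A\cup B\\ s+1\end{pmatrix}\setminus\begin{pmatrix}B\\ s+1\end{pmatrix}$ satisfies $\mathbf{a}-\mathrm{mdeg}(\mathbf{u})=\sum_{i\in T}e_i$ (which forces $|\mathbf{a}|=(d-1)+(s+1)=d+s$), and is $0$ otherwise. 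This is exactly condition $(\spadesuit)$, giving~(1). For~(2), I would sum~(1) over all $\mathbf{a}$ with $|\mathbf{a}|=d+s$: the contributing summands are in bijection with the admissible subsets $T$, whose count is $\begin{pmatrix}r_1\\ s+1\end{pmatrix}-\begin{pmatrix}r_2\\ s+1\end{pmatrix}$.

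The main obstacle is the splitting of the long exact sequence, but it reduces to the transparent observation that the inclusion of Koszul complexes indexed by $B\subseteq A\cup B$ is a split injection on each term; alternatively one may quote \cite[Proposition~2.4]{BYZ} directly, as is done for Theorem~\ref{main1}.
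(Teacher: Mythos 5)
Your proposal is correct and follows essentially the same route as the paper: the same decomposition $J=I+L$ with $L=\mathbf{u}(x_i\:\; i\in A\cup B)$ and $I\cap L=\mathbf{u}(x_i\:\; i\in B)$, the same Mayer--Vietoris sequence, and the same Koszul descriptions of $L$ and $I\cap L$. The only (harmless) difference is that you justify the splitting of the long exact sequence directly via the termwise split inclusion of Koszul complexes, whereas the paper outsources this to \cite[Proposition 2.4]{BYZ} and \cite[Lemma 2.2]{BYZ}.
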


 \begin{proof} With the notation as above, as in the proof of  \cite[Proposition  2.4]{BYZ}, we conclude that the connecting homomorphism $\gamma_{i+1,\mathbf{a}}: \mathrm{Tor}_{i+1}(I+L, \mathbb{K})_{\mathbf{a}}\rightarrow  \mathrm{Tor}_{i+1}(I\cap L, \mathbb{K})_{\mathbf{a}}$ induced by the short exact sequence
 $0\rightarrow I\cap L\rightarrow I\oplus L\rightarrow I+L\rightarrow 0$ is surjective for all $i\geq 0$ and all $\mathbf{a}\in \mathbb{Z}_{\geq 0}^n$ with $|\mathbf{a}|>i+d$. Thus, $\mathrm{Tor}_i(I,\mathbb{K})_{\mathbf{a}}\cong \mathrm{Tor}_i(I+L,\mathbb{K})_{\mathbf{a}}$ by \cite[Lemma 2.2]{BYZ}. For $|\mathbf{a}|=i+d$, we have the following exact sequence:
 $$0\rightarrow \mathrm{Tor}_i(I\cap L, \mathbb{K})_{\mathbf{a}}\rightarrow \mathrm{Tor}_i(I,\mathbb{K})_{\mathbf{a}}\oplus \mathrm{Tor}_i(L,\mathbb{K})_{\mathbf{a}}\rightarrow \mathrm{Tor}_i(I+L, \mathbb{K})_{\mathbf{a}}\rightarrow 0.$$
 This yields the required conclusions.
 \end{proof}

\begin{Example} \label{two1} \em Let $I=(x_1x_2x_3, x_3^2x_4,x_3x_4^2)\subseteq \mathbb{K}[x_1,\ldots,x_4]$. Then $x_1x_2$ is strongly linear over $I$. By Proposition~\ref{B},  we have $$\beta_{i,\mathbf{a}}(I+(x_1x_2^2))-\beta_{i,\mathbf{a}}(I)=\left\{
                                                              \begin{array}{ll}
                                                                1, & \hbox{$i=0$ and $\mathbf{a}=(1,2,0,0)$;} \\
                                                                1, & \hbox{$i=1$ and $\mathbf{a}=(2,2,0,0)$;} \\
                                                                0, & \hbox{otherwise.}
                                                              \end{array}
                                                            \right.$$
In particular,  $\mathrm{Projdim}(I+(x_1x_2^2))=\mathrm{Projdim}(I)$.
 \end{Example}

\begin{Corollary} \label{BYZ} Let $I$ and $J$ be monomial ideals generated in degree $d$ such that $I$ is strongly linear over $J$. Then
 $\mathrm{Projdim}(I)\geq \mathrm{Projdim}(J)$.\end{Corollary}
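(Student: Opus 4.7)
The plan is to reduce the general statement to the 1-step case via induction on the number of steps in the definition of strong linearity, and then to read off the desired inequality directly from the formulas in Proposition~\ref{B} and Theorem~\ref{main1}.

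First I would unfold the definition: since $I$ is strongly linear over $J$, there is a chain of monomial ideals
\[
J = J_0 \subseteq J_1 \subseteq \cdots \subseteq J_s = I
\]
in which each $J_{k+1}$ is 1-step strongly linear over $J_k$, i.e.\ $J_{k+1} = J_k + \mathbf{u}_k(x_i : i \in A_k)$ for some $\mathbf{u}_k$ strongly linear over $J_k$ and some $A_k \subseteq [n]$. It therefore suffices, by transitivity of the inequality $\mathrm{Projdim}(\cdot) \geq \mathrm{Projdim}(\cdot)$, to prove the claim when $I$ is 1-step strongly linear over $J$.

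Next, in the 1-step case I would combine the two ingredients already available. Theorem~\ref{main1}.(1) says $\beta_{s,t}(I) = \beta_{s,t}(J)$ whenever $t - s \neq d$, so the two ideals agree in every non-linear strand. In the linear strand $t - s = d$, Proposition~\ref{B}.(1) gives
\[
\beta_{s,\mathbf{a}}(I) - \beta_{s,\mathbf{a}}(J) \in \{0,1\}
\]
for every $\mathbf{a} \in \mathbb{Z}_{\geq 0}^n$, hence $\beta_{s,d+s}(I) \geq \beta_{s,d+s}(J)$ after summing the multigraded Betti numbers along $|\mathbf{a}| = d+s$. Putting the two observations together yields $\beta_{s,t}(I) \geq \beta_{s,t}(J)$ for every pair $(s,t)$.

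Finally, since the projective dimension of a finitely generated graded module is $\max\{s : \beta_{s,t}(\cdot) \neq 0 \text{ for some } t\}$, picking $s = \mathrm{Projdim}(J)$ and some $t$ with $\beta_{s,t}(J) \neq 0$ forces $\beta_{s,t}(I) \neq 0$, and hence $\mathrm{Projdim}(I) \geq \mathrm{Projdim}(J)$. Because all the heavy lifting has been done in Theorem~\ref{main1} and Proposition~\ref{B}, there is no real obstacle; the only thing to take care of is the bookkeeping when matching the ``$I \supseteq J$'' of the corollary against the ``$J \supseteq I$'' convention used in Proposition~\ref{B}, so that the direction of the Betti-number inequality is correct.
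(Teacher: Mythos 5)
Your proof is correct and follows the same route as the paper, which disposes of the corollary with the single line ``immediate from Proposition~\ref{B}'': the content you make explicit (induction on the number of steps, plus the termwise inequality $\beta_{s,\mathbf{a}}$ of the larger ideal $\geq$ that of the smaller one coming from Proposition~\ref{B}.(1), which already covers all multidegrees so the appeal to Theorem~\ref{main1} is redundant but harmless) is exactly the bookkeeping the paper leaves to the reader. The direction-of-inclusion caveat you flag at the end is handled correctly.
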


\begin{proof} This  is immediate from Proposition~\ref{B}.
\end{proof}

\section{An application to Squarefree  case}

In this section, we reveal a relationship between a strongly linear monomial over a monomial ideal and a simplicial maximal subcircuit of a uniform clutter, which was introduced in \cite{BYZ}, and then point out some of  main results of \cite{BY} are actually the squarefree variants of results obtained in Section 3.

Recall that a {\it clutter} $\mathcal{C}$ on $[n]$ is a collection of subsets of $[n]$, called {\it circuits} of $\mathcal{C}$, such that if $F_1,F_2$ are distinct circuits  of $\mathcal{C}$, then $F_1\nsubseteq F_2$. A $d$-uniform clutter is a clutter in which every circuit contains exactly $d$ elements. Thus a 2-uniform clutter is nothing but  a simple graph.  Let $\mathcal{C}$ be a $d$-uniform clutter on $[n]$. The {\it circuit ideal} $I(\mathcal{C})$ of $\mathcal{C}$ is defined to be the ideal $$I(\mathcal{C})=(\mathbf{x}_{F}\:\; F\in \mathcal{C}),$$  where $\mathbf{x}_{F}=\prod_{i\in F}x_i$.

   Let $e$ be a $(d-1)$-subset  of $[n]$.  Following \cite{BYZ}, the subset $$N_{\mathcal{C}}[e]:=e\cup \{i\in [n]\:\, e\cup \{i\}\in \mathcal{C}\}$$ is called {\it closed neighborhood} of $e$ in $\mathcal{C}$. We say  $e$ is a {\it simplicial maximal subcircuit} of $\mathcal{C}$ if  $|N_{\mathcal{C}}[e]|\geq d$ and every $d$-subset of $N_C[e]$ is a circuit of $\mathcal{C}$. The set of all simplicial maximal subcircuits of $\mathcal{C}$ is denoted by $\mathrm{Simp}(\mathcal{C})$. We also use $\overline{\mathcal{C}}$ to denote the  complement clutter $\mathcal{C}_n^d\setminus \mathcal{C}$ of  $\mathcal{C}$. Here  $\mathcal{C}_n^d$ is the collection of all $d$-subsets of $[n]$.

Recall from Proposition~\ref{useful} that each monomial which is strongly linear over $I(\overline{\mathcal{C}})$ is squarefree.

\begin{Lemma} \label{clutter} Let $\mathcal{C}$ be a $d$-uniform clutter on $[n]$, and $e$ be  a $(d-1)$-subset of $[n]$. Set $\mathbf{x}_e=\prod_{i\in e}x_i$. Then $e$ is  a simplicial maximal subcircuit of $\mathcal{C}$  if and only if $I(\overline{\mathcal{C}}):\mathbf{x}_{e}$ is generated by a proper subset of  $\{x_i\:\; i\in [n]\setminus e\}$. In particular, if $e\in \mathrm{Simp}(\mathcal{C})$, then $\mathbf{x}_{e}$ is strongly linear over $I(\overline{\mathcal{C}})$.
\end{Lemma}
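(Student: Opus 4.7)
The plan is to compute the colon ideal $I(\overline{\mathcal{C}}):\mathbf{x}_e$ explicitly via the standard formula and then match up which variables appear with the combinatorial definition of $\mathrm{Simp}(\mathcal{C})$.

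First I would write
\[
I(\overline{\mathcal{C}}):\mathbf{x}_e = \Bigl(\frac{\mathbf{x}_F}{\gcd(\mathbf{x}_F,\mathbf{x}_e)} \;:\; F\in\overline{\mathcal{C}}\Bigr) = \bigl(\mathbf{x}_{F\setminus e} \;:\; F\in\overline{\mathcal{C}}\bigr),
\]
using that every $\mathbf{x}_F$ is squarefree. The degree-one pieces arising here are exactly the variables $x_i$ such that $e\cup\{i\}\in\overline{\mathcal{C}}$, i.e.\ those with $i\in[n]\setminus e$ and $e\cup\{i\}\notin\mathcal{C}$; equivalently, $i\in[n]\setminus N_{\mathcal{C}}[e]$. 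Thus $(x_i : i\in[n]\setminus N_{\mathcal{C}}[e])\subseteq I(\overline{\mathcal{C}}):\mathbf{x}_e$ automatically.

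For the direction $(\Rightarrow)$, assume $e\in\mathrm{Simp}(\mathcal{C})$. I claim the reverse inclusion holds: given $F\in\overline{\mathcal{C}}$, the set $F$ cannot be contained in $N_{\mathcal{C}}[e]$, since otherwise the simplicial maximal condition would force $F\in\mathcal{C}$. Because $F\cap e\subseteq e\subseteq N_{\mathcal{C}}[e]$, this implies $F\setminus e\not\subseteq N_{\mathcal{C}}[e]$, so some $i\in F\setminus e$ lies outside $N_{\mathcal{C}}[e]$ and $x_i$ divides $\mathbf{x}_{F\setminus e}$. Hence $I(\overline{\mathcal{C}}):\mathbf{x}_e=(x_i : i\in[n]\setminus N_{\mathcal{C}}[e])$, a subset of $\{x_i : i\notin e\}$. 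The subset is proper because $|N_{\mathcal{C}}[e]|\geq d>|e|$ produces at least one $i\in N_{\mathcal{C}}[e]\setminus e$, whose variable does not appear in the colon ideal.

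For the direction $(\Leftarrow)$, suppose $I(\overline{\mathcal{C}}):\mathbf{x}_e=(S)$ for some proper subset $S\subsetneq\{x_i : i\in[n]\setminus e\}$. Pick $i_0\notin e$ with $x_{i_0}\notin S$; then $x_{i_0}\mathbf{x}_e=\mathbf{x}_{e\cup\{i_0\}}\notin I(\overline{\mathcal{C}})$ gives $e\cup\{i_0\}\in\mathcal{C}$, whence $|N_{\mathcal{C}}[e]|\geq d$. Moreover, from the first paragraph the variables in $S$ are precisely $\{x_i : i\in[n]\setminus N_{\mathcal{C}}[e]\}$. If some $d$-subset $F\subseteq N_{\mathcal{C}}[e]$ were not in $\mathcal{C}$, then $\mathbf{x}_{F\setminus e}\in I(\overline{\mathcal{C}}):\mathbf{x}_e=(S)$ would be divisible by some $x_j$ with $j\notin N_{\mathcal{C}}[e]$, contradicting $F\setminus e\subseteq N_{\mathcal{C}}[e]\setminus e$. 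Hence every $d$-subset of $N_{\mathcal{C}}[e]$ is a circuit, so $e\in\mathrm{Simp}(\mathcal{C})$. The ``in particular'' assertion is immediate: since $\deg(\mathbf{x}_e)=d-1$ and the colon ideal is generated by variables, $\mathbf{x}_e$ is strongly linear over $I(\overline{\mathcal{C}})$ by definition.

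There is no real obstacle here; this is a bookkeeping translation between the combinatorial notion of a simplicial maximal subcircuit and the algebraic structure of the colon ideal. The only point requiring care is to track the role of the ``proper subset'' condition, which is exactly what encodes $|N_{\mathcal{C}}[e]|\geq d$.
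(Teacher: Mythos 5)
Your proof is correct and follows essentially the same route as the paper: both arguments reduce to showing $I(\overline{\mathcal{C}}):\mathbf{x}_e=(x_i : i\in[n]\setminus N_{\mathcal{C}}[e])$ and identifying the clique condition on $N_{\mathcal{C}}[e]$ with the statement that every $F\in\overline{\mathcal{C}}$ meets $[n]\setminus N_{\mathcal{C}}[e]$. Your explicit tracking of the ``proper subset'' condition via $|N_{\mathcal{C}}[e]|\geq d$ is a welcome clarification of a point the paper leaves implicit.
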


\begin{proof} First we observe that if $K$ is a subset of $[n]$ with $|K|\geq d$, then $K$ is a clique of $\mathcal{C}$ if and only if for any $F\in \overline{\mathcal{C}}$, $F\setminus K$ is not empty. Note that This statement is immediate from the definitions.

Suppose that $e$ is  a simplicial maximal subcircuit of $\mathcal{C}$. Then $N_{\mathcal{C}}[e]$ is a clique of $\mathcal{C}$ with $|N_{\mathcal{C}}[e]|\geq d$. First, we note that if $i\in [n]\setminus N_{\mathcal{C}}[e]$, then $e\cup \{i\}\notin \mathcal{C}$, and so $\mathbf{x}_{e}x_i\in I(\overline{\mathcal{C}})$. Next, for any $F\in \overline{\mathcal{C}}$, since $F\setminus N_{\mathcal{C}}[e]$ is not empty, we have $\frac{[\mathbf{x}_F,\mathbf{x}_e]}{\mathbf{x}_e}\in (x_i\:\; i\in [n]\setminus N_{\mathcal{C}}[e])$. From this it follows that $I(\overline{\mathcal{C}}):\mathbf{x}_e=(x_i\:\; i\in [n]\setminus N_{\mathcal{C}}[e])$. This proves the necessity.

Conversely, we may assume that $I(\overline{\mathcal{C}}):\mathbf{x}_e=(x_i\:\; i\in A)$ with $A\subsetneq [n]\setminus e$. It is easy to check that $N_{\mathcal{C}}[e]=[n]\setminus A$. Given any $F\in \overline{\mathcal{C}}$, we have $x_k$ divides $\frac{[\mathbf{x}_F,\mathbf{x}_e]}{\mathbf{x}_e}$ for some $k\in A$ by Lemma~\ref{1}. This implies $(F\setminus e)\cap A\neq \emptyset$, and then  $F\setminus ([n]\setminus A)\neq \emptyset$. Hence $N_{\mathcal{C}}[e]$ is a clique of $\mathcal{C}$, and so $e\in \mathrm{Simp}(\mathcal{C})$.
\end{proof}

 By  Lemma~\ref{clutter},   we obtain  the following  squarefree version  of Proposition~\ref{B}.

\begin{Corollary} \label{last} Let $\mathcal{C}$ be a d-uniform clutter on $[n]$ and $e\in \mathrm{Simp}(\mathcal{C})$. Suppose that $A$ is a non-empty subset of $\{F\in \mathcal{C}\:\; e\subsetneq F\}$. Let $\mathcal{D}=\mathcal{C}\setminus A$ and set $I=I(\overline{\mathcal{C}})$ and $J=I(\overline{\mathcal{D}})=I+(\mathbf{x}_{F}\:\; F\in A)$. Put $X=\{i\in [n]: x_i\mathbf{x}_{e}\in A\}$ and $Y=\{i\in [n]\:\; x_i\in I:\mathbf{x}_{e}\}$. Then the following statements hold:
\begin{enumerate}
                       \item For any $i\geq 0$ and any $\mathbf{a}\in \mathbb{Z}^n$, we have  $$\beta_{i,\mathbf{a}}(J)=\beta_{i,\mathbf{a}}(I)+\left\{
                                                 \begin{array}{ll}
                                                 1, & \hbox{$\mathbf{a}$ satifies {\em   ($\clubsuit$);}} \\
                                                 0, & \hbox{otherwise.}
\end{array}\right.$$
                  Here, {\em  ($\clubsuit$)} means the  conditions that $|\mathbf{a}|=d+i$, $\mathbf{a}-\mathrm{mdeg}(\mathbf{x}_e)\in \{0,1\}^n$ and $\mathrm{supp}(\mathbf{a}-\mathrm{mdeg}(\mathbf{x}_e))\in \begin{pmatrix}X\cup Y\\ i+1\end{pmatrix} \setminus \begin{pmatrix} Y\\ i+1\end{pmatrix}$.
                       \item For all $i\geq 0$, $$\beta_{i,d+i}(J)=\beta_{i,d+i}(I)+\begin{pmatrix}r_1\\ i+1\end{pmatrix}-\begin{pmatrix}r_2\\ i+1\end{pmatrix},$$ where $r_1=|X\cup Y|, r_2=|Y|.$
                     \end{enumerate}

\end{Corollary}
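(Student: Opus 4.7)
The plan is to reduce Corollary~\ref{last} directly to Proposition~\ref{B} by way of Lemma~\ref{clutter}, since Corollary~\ref{last} is essentially the squarefree translation of that proposition.

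First, I would unpack the ideal $J$. Since every $F \in A$ satisfies $e \subsetneq F$ with $|F| = d$ and $|e| = d-1$, each such $F$ is uniquely of the form $e \cup \{i\}$ for some $i \in [n]\setminus e$. By the definition of $X$, this gives $A = \{e \cup \{i\} : i \in X\}$, hence
$$J = I + (\mathbf{x}_F : F \in A) = I + \mathbf{x}_e (x_i : i \in X).$$
Next, since $e \in \mathrm{Simp}(\mathcal{C})$, Lemma~\ref{clutter} tells me two things at once: the monomial $\mathbf{x}_e$ is strongly linear over $I = I(\overline{\mathcal{C}})$, and $I:\mathbf{x}_e = (x_i : i \in [n]\setminus N_{\mathcal{C}}[e])$. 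The latter set of variables is precisely $(x_i : i \in Y)$, directly from the definition of $Y$.

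At this point the hypotheses of Proposition~\ref{B} are in force with the dictionary $\mathbf{u} \leftrightarrow \mathbf{x}_e$, the set called $A$ there $\leftrightarrow X$ here, and the set called $B$ there $\leftrightarrow Y$ here. Substituting this dictionary into condition $(\spadesuit)$ of Proposition~\ref{B}.(1) reproduces condition $(\clubsuit)$ of Corollary~\ref{last}.(1) verbatim, proving part (1). Part (2) of the corollary is obtained in the same way from Proposition~\ref{B}.(2): the parameters $r_1 = |A \cup B|$ and $r_2 = |B|$ there become $r_1 = |X \cup Y|$ and $r_2 = |Y|$ here.

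Since the argument is a mechanical translation through Lemma~\ref{clutter}, there is no real obstacle. The only point that warrants explicit verification is the identification $I:\mathbf{x}_e = (x_i : i \in Y)$; this is immediate once one combines Lemma~\ref{clutter} with the definition of $Y$, so it is more a bookkeeping remark than a substantial step.
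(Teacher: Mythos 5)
Your proposal is correct and follows exactly the route the paper intends: the paper itself derives Corollary~\ref{last} by observing (via Lemma~\ref{clutter}) that $\mathbf{x}_e$ is strongly linear over $I(\overline{\mathcal{C}})$ with $I:\mathbf{x}_e=(x_i\:\; i\in Y)$, writing $J=I+\mathbf{x}_e(x_i\:\; i\in X)$, and then invoking Proposition~\ref{B} with the same dictionary you use. Your explicit verification of $A=\{e\cup\{i\}\:\; i\in X\}$ and of the identification of $Y$ is exactly the bookkeeping the paper leaves implicit.
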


It is not difficult to see that  \cite[Theorem 2.4]{BY}  is  a special case of Corollary~\ref{last}.(1) when  $|A|=1$, and \cite[Corollary 2.6]{BY} coincides with Corollary~\ref{last}.(2).

\vspace{5mm}
\noindent{\bf Acknowledgement}:  We thank the anonymous referee for his/her careful reading and useful comments.
This project is supported by NSFC (No. 11971338)

\end{document}